\date{October 11, 2009}
\newtheorem{theorem}{Theorem}[section]
\newtheorem{lemma}[theorem]{Lemma}
\newtheorem{proposition}[theorem]{Proposition}
\newtheorem{corollary}[theorem]{Corollary}
\newtheorem{definition}[theorem]{Definition}
\theoremstyle{remark}
\newtheorem{remark}[theorem]{Remark}
\newcommand{\la}{\langle}
\newcommand{\ra}{\rangle}
\newcommand{\too}{\longrightarrow}
\newcommand{\inc}{\hookrightarrow}
\newcommand{\bd}{\partial}
\newcommand{\x}{\times}
\newcommand{\ox}{\otimes}
\newcommand{\Vol}{\operatorname{Vol}}
\newcommand{\Hol}{\operatorname{Hol}}
\newcommand{\cA}{{\mathcal A}}
\newcommand{\cC}{{\mathcal C}}
\newcommand{\cM}{{\mathcal M}}
\newcommand{\cT}{{\mathcal T}}
\newcommand{\RR}{{\mathbb R}}
\newcommand{\ZZ}{{\mathbb Z}}
\renewcommand{\b}{\beta}
\newcommand{\g}{\gamma}
\begin{document}

\title{Intersection theory for ergodic solenoids}

\subjclass[2000]{Primary: 37A99. Secondary: 58A25, 57R95, 55N45.} \keywords{Real homology,
Ruelle-Sullivan current, solenoid, ergodic, transversal intersection.}

\author[V. Mu\~{n}oz]{Vicente Mu\~{n}oz}
\address{Instituto de Ciencias Matem\'aticas
CSIC-UAM-UC3M-UCM, Consejo Superior de Investigaciones Cient\'{\i}ficas,
Serrano 113 bis, 28006 Madrid, Spain}

\address{Facultad de
Matem\'aticas, Universidad Complutense de Madrid, Plaza de Ciencias
3, 28040 Madrid, Spain}

\email{vicente.munoz@imaff.cfmac.csic.es}

\author[R. P\'{e}rez Marco]{Ricardo P\'{e}rez Marco}
\address{CNRS, LAGA UMR 7539, Universit\'e Paris XIII, 
99, Avenue J.-B. Cl\'ement, 93430-Villetaneuse, France}


\email{ricardo@math.univ-paris13.fr}

\thanks{Partially supported through Spanish MEC grant MTM2007-63582.}

\maketitle

\begin{abstract}
We develop the intersection theory associated to immersed, oriented
and measured solenoids, which were introduced in \cite{MPM1}.
\end{abstract}

\section{Introduction} \label{sec:introduction}

In \cite{MPM1}, the authors define the concept of $k$-solenoid as an
abstract laminated space, and prove that an solenoid with a transversal measure
immersed in a smooth manifold defines a generalized Ruelle-Sullivan current.
The purpose of the current paper is to study the intersection theory of such
objects.

If $M$ is a smooth manifold, any closed oriented submanifold $N\subset M$
of dimension $k$ determines a homology class in
$H_k(M, \ZZ)$. This homology class in $H_k(M,\RR)$, as dual of De
Rham cohomology, is explicitly given by integration of the
restriction to $N$ of differential $k$-forms on $M$.
For representing real homology classes, we need to consider more
general objects. In \cite{MPM1}, we
define a $k$-solenoid to be a Hausdorff compact space foliated by
$k$-dimensional leaves with finite dimensional transversal structure
(see the precise definition in section \ref{sec:minimal}). For these
oriented solenoids we can consider $k$-forms that we can integrate
provided that we are given a transversal measure invariant by the
holonomy group. We define an immersion of a solenoid $S$ into $M$ to
be a regular map $f: S\to M$ that is an immersion on each leaf. If
the solenoid $S$ is endowed with a transversal measure $\mu$, then
any smooth $k$-form in $M$ can be pulled back to $S$ by $f$ and
integrated. This defines a closed
current that we denote by $(f,S_\mu )$ and
call a generalized current. This defines a homology class
$[f,S_\mu] \in H_k(M,\RR )$. This construction generalizes the currents
introduced by Ruelle and Sullivan in \cite{RS}.

In \cite{MPM4}, we prove that
every real homology class in $H_k(M,\RR )$ can be realized by a
generalized current $(f,S_\mu)$ where $S_\mu$ is an oriented,
minimal, uniquely ergodic solenoid.
Uniquely ergodic solenoids are defined in \cite{MPM1}. These are minimal
solenoids which possess a unique 
transversal measure.
The importance of such solenoids stems from the fact that their 
topology, more precisely the recurrence of any leaf, determines its solenoidal structure.
This is formulated
in a precise way through the Schwartzman measures in \cite{MPM2}.
Finally, in \cite{MPM5} we prove that the generalized currents associated
to oriented, minimal, uniquely ergodic, immersed solenoids are dense in
the space of closed currents.

Let us review the contents of the paper. In section \ref{sec:minimal} we recall
the definition of solenoids and of the generalized current associated to an
immersed, oriented and measured solenoid. In section \ref{sec:homotopy} we prove
that the generalized current is invariant by perturbation (homotopy) of a solenoid.
Section \ref{sec:intersection} is devoted to defining the intersection of two
solenoids $S_{1,\mu_1}$, $S_{2,\mu_2}$
which intersect transversally (i.e.\ when the leaves intersect transversally).
In this case, the intersection is a solenoid and it has a natural transversal measure
associated to the measures of the two given solenoids. The generalized current
of the intersection solenoid is the product of the generalized currents of the given
solenoids. We also prove that we may homotop a solenoid to make it intersect transversally
a submanifold whenever the transversal structure is Cantor.

In general, it is not possible to perturb solenoids to make them intersect transversally.
This clearly holds in the case of solenoids. But it is also true when we try to perturb
smoothly solenoids with transversal Cantor structure, as the 
persistence of homoclinic tangencies for stable and unstable foliations shows (see \cite{Takens}).
So the concept of almost everywhere transversality introduced in section  \ref{sec:aet} is
very useful. In the case of complementary dimensions, two solenoids are said to intersect
almost everywhere transversally if
$\mu_1\times \mu_2$-almost all leaves intersect transversally
and the other leaves intersect in isolated points. We define a measure for the intersection
and the integral of the measure (taking also into account the intersection index) equals
the product of the generalized currents. In the case of non-complementary dimensions, we shall
require that the models are conjugated to analytic solenoids. This allows to define an
intersection current supported on the intersection of the solenoids. That is worked out
in section \ref{sec:analytic}.

\bigskip

\noindent \textbf{Acknowledgements.} \
The authors are grateful to Alberto Candel, Etienne Ghys, Nessim Sibony,
Dennis Sullivan and Jaume Amor\'os
for their comments and interest on this work.
The first author wishes to acknowledge Universidad Complutense de
Madrid and Institute for Advanced Study at Princeton for their
hospitality and for providing excellent working conditions.  The second author
thanks Jean Bourgain and the IAS at Princeton for their hospitality
and facilitating the collaboration of both authors.

\section{Measured solenoids and generalized currents} \label{sec:minimal}

Let us recall the definition of a $k$-solenoid from \cite{MPM1}.

\begin{definition}\label{def:W}
  Let $0\leq s,r \leq \omega$, $r\geq s$, and let $k,l\geq 0$ be two integers.
  A foliated manifold (of dimension $k+l$, with $k$-dimensional
  leaves, of regularity $C^{r,s}$) is a smooth
  manifold $W$ of dimension $k+l$ endowed with an atlas
  $\cA=\{ (U_i,\varphi_i)\}$, $\varphi_i:U_i\to \RR^k\x \RR^l$,
  whose changes of charts
 $$
 \varphi_{ij}=\varphi_i\circ \varphi_j^{-1}: \varphi_j(U_i\cap U_j)
 \to \varphi_i(U_i\cap U_j) \ \, ,
 $$
are of the form $\varphi_{ij}(x,y)=(X_{ij}(x,y), Y_{ij}(y))$,
where $Y_{ij}(y)$ is of class $C^s$ and $X_{ij}(x,y)$ is of class
$C^{r,s}$.

A flow-box for $W$ is a pair $(U,\varphi)$ consisting of an open
subset $U\subset W$ and a map $\varphi:U\to \RR^k\x \RR^l$ such that
$\cA \cup \{(U,\varphi)\}$ is still an atlas for $W$.
\end{definition}

(Here $C^\omega$ are the analytic functions.)


Given two foliated manifolds $W_1$, $W_2$ of dimension $k+l$, with
$k$-dimensional leaves, and of regularity $C^{r,s}$, a regular map
$f:W_1\to W_2$ is a continuous map which is locally, in
flow-boxes, of the form $f(x,y)=(X(x,y),Y(y))$, where $Y$ is of
class $C^{s}$ and $X$ is of class $C^{r,s}$.
A diffeomorphism $\phi:W_1\to W_2$ is a homeomorphism such that
$\phi$ and $\phi^{-1}$ are both regular maps.

\begin{definition}\label{def:k-solenoid}\textbf{\em ($k$-solenoid)}
Let  $0\leq r \leq s\leq \omega$, and let $k,l\geq 0$ be two
integers. A pre-solenoid of dimension $k$, of class $C^{r,s}$ and
transversal dimension $l$ is a pair $(S,W)$ where $W$ is a
foliated manifold and $S\subset W$ is a compact subspace which is
a collection of leaves.

Two pre-solenoids $(S,W_1)$ and $(S,W_2)$ are equivalent if there
are open subsets $U_1\subset W_1$, $U_2\subset W_2$ with $S\subset
U_1$ and $S\subset U_2$, and a diffeomorphism $f: U_1\to U_2$ such
that $f$ is the identity on $S$.

A $k$-solenoid of class $C^{r,s}$ and transversal dimension $l$
(or just a $k$-solenoid, or a solenoid) is an equivalence class of
pre-solenoids.
\end{definition}

We usually denote a solenoid by $S$, without making explicit
mention of $W$. We shall say that $W$ defines the solenoid
structure of $S$.

\begin{definition}\label{def:flow-box}
\textbf{\em (Flow-box)} Let $S$ be a solenoid. A flow-box for $S$
is a pair $(U,\varphi)$ formed by an open subset $U\subset S$ and
a homeomorphism
 $$
 \varphi : U\to D^k\times K(U) \, ,
 $$
where $D^k$ is the $k$-dimensional open ball and $K(U)\subset
\RR^l$, such that there exists a foliated manifold $W$ defining
the solenoid structure of $S$, $S\subset W$, and a flow-box
$\hat\varphi:\hat U\to \RR^k\x\RR^l$ for $W$, with $U=\hat U\cap
S$,  $\hat\varphi(U)= D^k\x K(U)\subset \RR^k\x\RR^l$ and
$\varphi=\hat\varphi_{|U}$.

The set $K(U)$ is the transversal space of the flow-box. The
dimension $l$ is the transversal dimension.
\end{definition}

As $S$ is locally compact, any point of $S$ is contained in a flow-box $U$ whose closure
$\overline U$ is contained in a bigger flow-box. For such flow-box, $\overline U
\cong \overline{D}^k \times \overline K(U)$, where $\overline D^k$
is the closed unit ball, $\overline K(U)$ is some compact subspace of
$\RR^l$, and $U=D^k \times  K(U) \subset
\overline{D}^k \times \overline K(U)$. 
All flow-boxes that we shall use are of this type without making further
explicit mention. 

\begin{definition} \label{def:leaf}\textbf{\em (Leaf)}
A leaf of a $k$-solenoid $S$ is a leaf $l$ of any foliated manifold
$W$ inducing the solenoid structure of $S$, such that $l\subset S$.
Note that this notion is independent of $W$.
\end{definition}

\begin{definition} \label{def:oriented-solenoid}\textbf{\em (Oriented solenoid)}
An oriented solenoid is a solenoid $S$ such that there is a
foliated manifold $W\supset S$ inducing the solenoid structure of
$S$, where $W$ has oriented leaves (in a transversally continuous way).
\end{definition}

A solenoid is minimal if it does not contain a proper sub-solenoid. In
\cite{MPM1} it is proven that minimal solenoids always exist.

\begin{definition}\label{def:transversal}\textbf{\em (Transversal)}
Let $S$ be a $k$-solenoid. A local transversal at a point $p\in S$
is a subset $T$ of $S$ with $p\in T$, such that there is a flow-box $(U, \varphi)$
of $S$ with $U$ a neighborhood of $p$ containing $T$ and such that
 $$
 \varphi (T)=\{0\}\x K(U) \, .
 $$

A transversal $T$ of $S$ is a compact subset of $S$ such that for
each $p\in T$ there is an open neighborhood $V$ of $p$ such that
$V\cap T$ is a local transversal at $p$.
\end{definition}

If $S$ is a $k$-solenoid of class $C^{r,s}$, then any transversal
$T$ inherits an  $l$-dimensional $C^s$-Whitney structure.

\begin{definition}\label{def:global-transversal}
A transversal $T$ of $S$ is a global transversal if all leaves
intersect $T$.
\end{definition}

\begin{definition}\label{def:holonomy}\textbf{\em (Holonomy)}
Given two points $p_1$ and $p_2$ in the same leaf, two local
transversals $T_1$ and $T_2$, at $p_1$ and $p_2$ respectively, and
a path $\gamma :[0,1]\to S$, contained in the leaf with endpoints
$\g(0)=p_1$ and $\g(1)=p_2$, we define a germ of a map (the
holonomy map)
 $$
 h_\gamma: (T_1,p_1) \to (T_2, p_2)\, ,
 $$
by lifting $\gamma$ to nearby leaves.

We denote by ${\Hol}_S (T_1, T_2)$ the set of germs of holonomy
maps from $T_1$ to $T_2$. These form the holonomy pseudo-group.
\end{definition}

\begin{definition} \label{def:transversal-measure} \textbf{\em (Transversal measure)}
Let $S$ be a $k$-solenoid. A transversal measure $\mu=(\mu_T)$ for
$S$ associates to any local transversal $T$ a locally finite measure
$\mu_T$ supported on $T$, which are invariant by the holonomy
pseudogroup. More precisely, if $T_1$ and $T_2$ are two transversals
and $h : V\subset T_1 \to T_2$ is a holonomy map, then
 $$
 h_* (\mu_{T_1}|_{V})= \mu_{T_2}|_{h(V)} \, .
 $$
We assume that a transversal measure $\mu$ is non-trivial, i.e. for
some $T$, $\mu_T$ is non-zero.

We denote by $S_\mu$ a $k$-solenoid $S$ endowed with a transversal
measure $\mu=(\mu_T)$. We refer to $S_\mu$ as a measured solenoid.
\end{definition}

\bigskip

We fix now a $C^\infty$ manifold $M$ of dimension $n$.

\begin{definition}\label{def:solenoid-in-manifold}
\textbf{\em (Immersion and embedding of solenoids)} Let $S$ be a
$k$-solenoid of class $C^{r,s}$ with $r\geq 1$. An immersion
 $$
 f:S\to M
 $$
is a regular map (that is, it has an extension $\hat{f}:W\to M$ of
class $C^{r,s}$, where $W$ is a foliated manifold which defines
the solenoid structure of $S$), such that the differential
restricted to the tangent spaces of leaves has rank $k$ at every
point of $S$. We say that $f:S\to M$ is an immersed solenoid.


Let $r,s\geq 1$. A transversally immersed solenoid $f:S\to M$ is a
regular map $f:S\to M$ such that it admits an extension $\hat{f}:W\to M$ which is
an immersion (of a $(k+l)$-dimensional manifold into an
$n$-dimensional one) of class $C^{r,s}$, such that the images of
the leaves intersect transversally in $M$.

An embedded solenoid $f:S\to M$ is a transversally immersed solenoid of class $C^{r,s}$,
with $r,s\geq 1$,
with injective $f$, that is, the leaves do not intersect or
self-intersect.
\end{definition}

Note that under a transversal immersion, resp.\ an embedding,
$f:S\to M$, the images of the leaves are immersed, resp.\
injectively immersed, submanifolds.


Let $M$ be a smooth manifold. We shall denote the space of compactly supported currents
of dimension $k$ by
 $$
 \cC_k(M) \, .
 $$
These currents are functionals $T:\Omega^k(M)\to \RR$. 
A current $T\in \cC_k(M)$ is closed if $T(d\alpha)=0$
for any $\alpha\in \Omega^{k-1}(M)$. Therefore, by restricting to the closed forms,
a closed current $T$ defines a linear map
  $$
 [T]: H^k(M,\RR) \longrightarrow \RR\, .
 $$
By duality, $T$ defines a
real homology class $[T]\in H_k(M,\RR)$.

\begin{definition}\label{def:Ruelle-Sullivan}\textbf{\em (Generalized currents)}
Let $S$ be an oriented  $k$-solenoid of class $C^{r,s}$, $r\geq
1$, endowed with a transversal measure $\mu=(\mu_T)$. An immersion
 $$
 f:S\to M
 $$
defines a current $(f,S_\mu)\in \cC_k(M)$, called generalized Ruelle-Sullivan
current (or just generalized current), as follows.
Let $\omega$ be an $k$-differential form in $M$. The pull-back
$f^* \omega$ defines a $k$-differential form on the leaves of $S$.

Let $S=\bigcup_i U_i$ be an open cover of the solenoid.
Take a partition of unity $\{\rho_i\}$ subordinated to
the covering $\{U_i\}$. We define
  $$
   \la (f,S_\mu),\omega \ra = \sum_i
   \int_{K(U_i)} \left( \int_{L_y} \rho_i f^* \omega \right)
   d\mu_{K(U_i)} (y) \, ,
  $$
where $L_y$ denotes the horizontal disk of the flow-box.

The current $(f,S_\mu)$ is closed, hence it defines a
real homology class
 $$
 [f,S_\mu]\in H_k(M,\RR )\, ,
 $$
called Ruelle-Sullivan homology class.
\end{definition}

{}From now on, we shall consider a $C^\infty$ compact and oriented
manifold $M$ of dimension $n$. Let $f:S_\mu\to M$ be an oriented
measured $k$-solenoid immersed in $M$.
We shall denote
  $$
  [f,S_\mu]^* \in H^{n-k}(M,\RR)\, ,
  $$
the dual of $[f,S_\mu]$ under the Poincar\'{e} duality isomorphism $H_k(M,\RR)\cong
H^{n-k}(M,\RR)$.

%

\section{Homotopy of solenoids}\label{sec:homotopy}

Let us see that the Ruelle-Sullivan homology class defined by an immersed
oriented measured $k$-solenoid does not change by perturbations.

\begin{definition}\textbf{\em (Solenoid with boundary)}
  Let $0\leq s \leq r\leq \omega$, and let $k,l\geq 0$ be two integers.
  A foliated manifold with boundary (of dimension $k+l$, with $k$-dimensional
  leaves, of class $C^{r,s}$) is a smooth
  manifold $W$ with boundary, of dimension $k+l$, endowed with an
  atlas
  $\{ (U_i,\varphi_i)\}$ of charts
  $$
  \varphi_i: U_i \to \varphi_i(U_i) \subset \RR^{k+l}_+ =\{(x_1,\ldots,
  x_k, y_1,\ldots, y_l \ ; \ x_1\geq 0\} \, ,
  $$
whose changes of charts are of the form $\varphi_i\circ
\varphi_j^{-1}(x,y) = (X_{ij}(x,y), Y_{ij}(y))$, where $Y_{ij}(y)$
is of class $C^s$ and $X_{ij}(x,y)$ is of class $C^{r,s}$.

A pre-solenoid with boundary is a pair $(S,W)$ where $W$ is a
foliated manifold with boundary and $S\subset W$ is a compact
subspace which is a collection of leaves.

Two pre-solenoids with boundary $(S,W_1)$ and $(S,W_2)$ are
equivalent if there are open subsets $U_1\subset W_1$, $U_2\subset
W_2$ with $S\subset U_1$ and $S\subset U_2$, and a diffeomorphism
$f: U_1\to U_2$ (preserving leaves, of class $C^{r,s}$) which is the
identity on $S$.

A $k$-solenoid with boundary is an equivalence class of
pre-solenoids with boundary.
\end{definition}

Note that any manifold with boundary is a solenoid with boundary.

The boundary of a $k$-solenoid with boundary $S$ is the
$(k-1)$-solenoid (without boundary) $\bd S$ defined by the foliated
manifold $\bd W$, where $W$ is a foliated manifold with boundary
defining the solenoid structure of $S$.

A $k$-solenoid with boundary $S$ has two types of flow-boxes. If
$p\in S-\bd S$ is an interior point, then there is a flow-box
$(U,\varphi)$ with $p\in U$, of the form $\varphi : U\to D^k\times
K(U)$. If $p\in \bd S$ is a boundary point, then there is a flow-box
$(U,\varphi)$ with $p\in U$ such that $\varphi$ is a homeomorphism
 $$
 \varphi : U\to D^k_+\times K(U) \, ,
 $$
where $D^k_+=\{ (x_1,\ldots, x_k)\in D_k \, ; \, x_1\geq 0\}$, and
$K(U)\subset \RR^l$, $\varphi(p)=(0,\ldots, 0, y_0)$, for some
$y_0\in K$. Note that writing
 $$
 U'=\bd S\cap U =\varphi^{-1}(D^{k-1}\x K(U))\, ,
 $$
where $D^{k-1} = \{ (0,x_2,\ldots, x_k)\in D_k \} \subset D_k^+$, we have that
$(U',\varphi_{|U'})$ is a flow-box for $\bd S$. Therefore, if $T$ is
a transversal for $\bd S$, then it is also transversal for $S$.

For a solenoid with boundary $S$ there is also a well-defined notion
of holonomy pseudo-group. If $T$ is a local transversal for $\bd S$,
and $h:T\to T$ is a holonomy map for $\bd S$ defined by a path in
$\bd S$, then $h$ lies in the holonomy pseudo-group of $S$. So
  $$
  \Hol_{\bd S}(T)\subset \Hol_S(T)\, ,
  $$
but they are in general not equal. In particular, if $S$ is
connected and minimal with non-empty boundary then
 $$
 \cM_\cT(S) \subset \cM_\cT(\bd S) \, .
 $$
That is, if $\mu=(\mu_T)$ is a transversal measure for $S$, then it
yields a transversal measure for $\bd S$, by considering only those
transversals $T$ which are transversals for $\bd S$. We denote this
transversal measure by $\mu$ again.

If $S$ comes equipped with an orientation, then $\bd S$ has a
natural induced orientation.
Note that any leaf $l\subset S$ is a manifold with boundary and
each connected component of $\bd l$ is a leaf of $\bd S$.

\begin{theorem}\textbf{\em (Stokes theorem)} \label{thm:Stokes}
  Let $f:S_\mu\to M$ be an oriented $(k+1)$-solenoid with boundary,
  endowed with a transversal measure, and immersed into a smooth
  manifold $M$. Let $\omega$ be a $k$-form on $M$. Then
  $$
  \la [f,S_\mu], d\omega \ra = \la [f_{|\bd S},\bd S_\mu] , \omega
  \ra \, .
  $$
\end{theorem}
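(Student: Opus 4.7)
The idea is to reduce to the classical Stokes theorem on the horizontal disks of flow-boxes, and observe that all interior contributions cancel in the sum because $\sum_i d\rho_i=0$, leaving only the contributions coming from boundary flow-boxes, which by construction are exactly the generalized current of $\bd S_\mu$.

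The plan is as follows. First, I would pick a finite cover of $S$ by flow-boxes $\{(U_i,\varphi_i)\}$, each of which is either interior, with $\varphi_i(U_i)=D^{k+1}\x K(U_i)$, or boundary type, with $\varphi_i(U_i)=D^{k+1}_+\x K(U_i)$. Take a partition of unity $\{\rho_i\}$ subordinate to this cover with $\supp\rho_i\subset U_i$. Writing $\b=f^*\o$ (a leafwise $k$-form on $S$), so that $f^*(d\o)=d\b$, and using $\rho_i\,d\b=d(\rho_i\b)-d\rho_i\wedge\b$ leafwise, the definition of the generalized current gives
\[
\la (f,S_\m),d\o\ra \;=\; \sum_i \int_{K(U_i)} \int_{L_y^i} d(\rho_i\b)\,d\m_{K(U_i)}(y) \;-\; \sum_i \int_{K(U_i)} \int_{L_y^i} d\rho_i\wedge\b\,d\m_{K(U_i)}(y),
\]
where $L_y^i$ denotes the horizontal disk of $U_i$ over $y$.

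Next I would evaluate each sum. For the first sum, apply the classical Stokes theorem on each disk $L_y^i$, using that $\rho_i\b$ has compact support on $L_y^i$. For interior flow-boxes the disk has empty manifold boundary and the term vanishes. For boundary flow-boxes $L_y^i\iso D^{k+1}_+$ has boundary the flat face $\bd L_y^i$, which is precisely the horizontal disk at $y$ of the induced flow-box $(U_i\cap\bd S,\varphi_i|_{U_i\cap\bd S})$ of $\bd S$. Since $\{\rho_i|_{\bd S}\}_{i\text{ bdry}}$ is a partition of unity on $\bd S$ subordinate to $\{U_i\cap\bd S\}$ and $\m$ restricts to the transversal measure of $\bd S$, the first sum equals $\la (f_{|\bd S},\bd S_\m),\o\ra$ by definition.

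The main obstacle is the second sum, which we need to vanish. The key is that the generalized current, extended to a functional $T_\m$ on leafwise $(k+1)$-forms on $S$ via $\la T_\m,\eta\ra=\sum_j\int_{K(U_j)}\int_{L_y^j}\rho_j\eta\,d\m(y)$, is a well-defined linear functional independent of the partition of unity. Concretely, holonomy invariance of $\m$ implies that whenever a leafwise top-form $\eta$ is supported in a single flow-box $U_i$, one has $\la T_\m,\eta\ra=\int_{K(U_i)}\int_{L_y^i}\eta\,d\m(y)$, regardless of which cover one uses. Since $d\rho_i\wedge\b$ is supported in $U_i$, each term of the second sum equals $\la T_\m,d\rho_i\wedge\b\ra$, and linearity together with $\sum_i d\rho_i=d(\sum_i\rho_i)=0$ on $S$ gives
\[
\sum_i \int_{K(U_i)} \int_{L_y^i} d\rho_i\wedge\b\,d\m(y) \;=\; \Big\la T_\m,\sum_i d\rho_i\wedge\b\Big\ra \;=\; 0.
\]
Combining the two evaluations yields the claimed identity. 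The most delicate step is the holonomy-invariance argument justifying that localized contributions from different flow-boxes can be combined into a single linear functional; once that is established, the rest is a direct leafwise application of classical Stokes.
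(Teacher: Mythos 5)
Your proposal is correct and follows essentially the same route as the paper's proof: cover by flow-boxes, apply leafwise classical Stokes to $d(\rho_i f^*\omega)$, and cancel the $\sum_i d\rho_i\wedge f^*\omega$ terms using $\sum_i d\rho_i\equiv 0$. You are in fact more explicit than the paper about the one delicate point (that holonomy invariance of $\mu$ is what lets the localized integrals over different flow-boxes be summed and cancelled), which the paper leaves implicit.
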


\begin{proof}
  Let $\{U_i\}$ be a covering of $S$ by flow-boxes, and let $\{\rho_i\}$
  be a partition of unity subordinated to it. Adding up the
  equalities
  $$
  \begin{aligned}
    \int_{K(U_i)}& \left( \int_{L_y} d\rho_i \wedge f^* \omega \right)
    d\mu_{K(U_i)} (y)
  +
    \int_{K(U_i)} \left( \int_{L_y} \rho_i f^* d\omega \right)
    d\mu_{K(U_i)} (y) \\
  &=
    \int_{K(U_i)} \left( \int_{L_y} d (\rho_i f^* \omega) \right)
    d\mu_{K(U_i)} (y)
  =
    \int_{K(U_i)} \left( \int_{\bd L_y} \rho_i f^* \omega \right)
    d\mu_{K(U_i)} (y)\,,
    \end{aligned}
  $$
 for all $i$, and using that $\sum d\rho_i \equiv 0$, we get
  $$
  \begin{aligned}
  \la  [f,S_\mu], d\omega \ra &\, =
   \sum_i \int_{K(U_i)} \left( \int_{L_y} \rho_i f^* d\omega \right)
    d\mu_{K(U_i)} (y) \\
  & \, =
   \sum_i \int_{K(U_i)} \left( \int_{\bd L_y} \rho_i f^* \omega \right)
    d\mu_{K(U_i)} (y)
    =
    \la [f_{|\bd S},\bd S_\mu] , \omega  \ra \, .
  \end{aligned}
  $$
\end{proof}

Let $S$ be a $k$-solenoid of class $C^{r,s}$. We give $S\x I=S\x
[0,1]$ a natural $(k+1)$-solenoid with boundary
structure of the same class, by taking
a foliated manifold $W\supset S$ defining the solenoid structure of
$S$, and foliating $W\x I$ with the leaves $l\x I$,
$l\subset W$ being a leaf of $W$. Then $S\x I \subset W\x
I$ is a $(k+1)$-solenoid with boundary.
The boundary of $S\x I$ is
 $$
 (S\x\{0\}) \sqcup (S\x \{1\})\, .
 $$

If $S$ is oriented then $S\x I$ is naturally oriented and its boundary
consists of  $S\x\{0\} \cong S$ with orientation reversed, and
$S\x\{1\} \cong S$ with orientation preserved.

Moreover if $T$ is a transversal for $S$, then $T'=T \x \{0\}$
is a transversal for $S'=S\x I$.
The following is immediate.

\begin{lemma} \label{lem:SxI}
There is an identification of the holonomies of $S$ and $S\x I$.
More precisely, under the identification $T\cong T' =T \x \{0\}$,
 $$
 \Hol_S(T) =\Hol_{S\x I}(T')\, .
 $$

In particular,
 $$
 \cM_\cT(S)=\cM_\cT(S\x I)\, .
 $$
\end{lemma}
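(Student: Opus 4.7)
The plan is to exploit the explicit product structure of the foliation on $S \x I$. Flow-boxes for $S\x I$ are obtained from flow-boxes $(U,\varphi)$ of $S$ by taking $(U\x I,\varphi\x \id_I)$, after reordering coordinates so that the transversal direction is still the last $l$-dimensional factor; this produces flow-boxes of the form $\varphi' : U\x I \to (D^k\x I)\x K(U)$ with the same transversal space $K(U)$. Consequently, if $T\subset S$ is a local transversal at $p$ with $\varphi(T)=\{0\}\x K(U)$, then $T' = T\x\{0\}\subset S\x I$ is a local transversal at $(p,0)$ for $S\x I$, sitting inside the same flow-box and identified with $T$ under projection on the first factor.

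Next I would check the two containments separately. For $\Hol_S(T)\subset \Hol_{S\x I}(T')$: given a path $\g:[0,1]\to S$ contained in a leaf $l$ with endpoints in local transversals $T_1,T_2$, the path $\g':=\g\x\{0\}$ lies in the leaf $l\x I$ of $S\x I$ and connects the corresponding points of $T_1',T_2'$. Lifting $\g'$ to nearby leaves of $W\x I$ is, by construction of the product foliation, the same as lifting $\g$ to nearby leaves of $W$ and taking the product with $\{0\}$; hence $h_{\g'} = h_\g$ under the identification $T_i\cong T_i'$.

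For the reverse inclusion $\Hol_{S\x I}(T')\subset \Hol_S(T)$: any holonomy germ of $S\x I$ between transversals contained in flow-boxes of the form above is represented by a path $\g'$ in some leaf $l\x I$ of $S\x I$. Project $\g'$ to $\g = \pi_1\circ \g' : [0,1]\to l \subset S$. Since the leaves of $W\x I$ are products, lifting $\g'$ to nearby leaves (which have the form $l_y\x I$ for nearby transversal parameters $y$) amounts to lifting $\g$ to $l_y$ and keeping the $I$-coordinate arbitrary; on the transversal slice $\{0\}\x K$ this gives exactly $h_\g$. Thus every holonomy germ of $S\x I$ at a transversal of the form $T'=T\x\{0\}$ comes from a holonomy germ of $S$ on $T$, proving the equality of pseudogroups.

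The second assertion $\cM_\cT(S)=\cM_\cT(S\x I)$ is then immediate: since transversals of $S$ and of $S\x I$ correspond bijectively under $T\leftrightarrow T\x\{0\}$ (any transversal of $S\x I$ can be isotoped within its flow-box to one of this form, and the holonomy pseudo-group is invariant under changing the transversal within a flow-box), and the invariance condition in Definition \ref{def:transversal-measure} is governed by the same pseudogroup on both sides, a system $(\mu_T)$ is holonomy-invariant for $S$ iff the corresponding system on $(T\x\{0\})$ is holonomy-invariant for $S\x I$. The only subtle point, and the one to write carefully, is the identification of transversals not of the product form $T\x\{0\}$ with such transversals via holonomy inside a single flow-box $U\x I \cong (D^k\x I)\x K(U)$; this is routine because any local transversal inside such a flow-box is holonomy-equivalent to the slice at $t=0$.
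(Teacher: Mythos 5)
Your proof is correct and follows exactly the argument the paper has in mind: the paper offers no written proof, simply declaring the lemma ``immediate'' from the product structure of the foliation on $S\x I$, and your two-containment argument via projecting and lifting paths between the leaves $l$ and $l\x I$ is precisely the routine verification being left to the reader. The final remark about reducing arbitrary local transversals of $S\x I$ to slices $T\x\{0\}$ via holonomy inside a single flow-box is the right point to flag, and your treatment of it is adequate.
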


\begin{definition}\textbf{\em (Equivalence of immersions)}\label{def:14.equivalence}
Two solenoid immersions $f_0:S_0\to M$ and $f_1 :S_1\to M$ of class
$C^{r,s}$ in $M$ are immersed equivalent if there is a
$C^{r,s}$-diffeomorphism $h:S_0\to S_1$ such that
 $$
 f_0=f_1\circ h \, .
 $$
Two measured solenoid immersions are immersed equivalent if $h$ can be chosen
to preserve the transversal measures.
\end{definition}

\begin{definition}\textbf{\em (Homotopy of immersions)}\label{def:14.homotopy}
Let $S$ be a $k$-solenoid of class $C^{r,s}$ with $r\geq 1$. A
homotopy between immersions $f_0:S\to M$ and $f_1: S\to M$ is an
immersion of solenoids $f:S\x I\to M$ such that $f_0(x)=f(x,0)$ and
$f_1(x)=f(x,1)$.
\end{definition}

\begin{definition} \textbf{\em (Cobordism of solenoids)}
\label{def:14.cobordism} Let $S_0$ and $S_1$ be two
$C^{r,s}$-solenoids. A cobordism of solenoids is a
$(k+1)$-solenoid  $S$  with boundary $\bd S=S_0\sqcup S_1$.

If $S_0$ and $S_1$ are oriented, then an oriented cobordism is a
cobordism $S$ which is an oriented solenoid such that it induces the
given orientation on $S_1$ and the reversed orientation on $S_0$.

If $S_0$ and $S_1$ have transversal measures $\mu_0$ and $\mu_1$,
respectively, then a measured cobordism is a cobordism $S$ endowed
with a transversal measure $\mu$ inducing the measures $\mu_0$ and
$\mu_1$ on $S_0$ and $S_1$, respectively.
\end{definition}

\begin{definition}\textbf{\em (Homology equivalence)}
\label{def:homology}
 Let $f_0:S\to M$ and $f_1: S\to M$ be two immersed solenoids in $M$.
 We say that they are homology equivalent if there exists a
 cobordism of solenoids $S$ between $S_0$ and $S_1$ and a solenoid
 immersion $f:S \to M$ with $f_{|S_0}=f_0$, $f_{|S_1}=f_1$.
 We call $f:S \to M$ a homology between $f_0:S\to M$ and $f_1: S\to M$.

 Let $f_0:S_{0,\mu_0}\to M$ and $f_1:S_{1,\mu_1}\to M$ be two immersed
 oriented measured solenoids. They are homology equivalent if
 there exists an immersed oriented measured solenoid $f:S_\mu\to M$ such that
 $f:S\to M$ is a homology between $f_0:S\to M$ and $f_1: S\to M$ and
 $S_\mu$ is a measured oriented cobordism from $S_0$ to $S_1$.
\end{definition}

Clearly two homotopic immersions of a solenoid give homology
equivalent immersions.

\begin{theorem} \label{thm:homeo-RS}
 Suppose that two oriented measured solenoids
 $f_0:S_{0,\mu_0}\to M$ and $f_1:S_{1,\mu_1}\to M$  immersed in $M$
 are homology equivalent. Then the
generalized currents coincide
   $$
   [f_0,S_{0,\mu_0}]=[f_1,S_{1,\mu_1}]\, .
   $$

 The same happens if they are immersed equivalent.
\end{theorem}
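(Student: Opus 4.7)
The plan is to derive both claims from the Stokes theorem for solenoids (Theorem \ref{thm:Stokes}) together with, in the second case, a change-of-variables argument. I begin with the homology equivalence. Let $f:S_\mu\to M$ be the oriented measured cobordism immersion produced by Definition \ref{def:homology}, so that $\bd S=S_0\sqcup S_1$ carries the orientation which agrees with $S_1$ and is opposite to that of $S_0$, and the restriction of $\mu$ to boundary transversals gives $\mu_0$ and $\mu_1$ on $S_0$ and $S_1$ respectively. For any closed $k$-form $\omega\in\Omega^k(M)$, the orientation convention yields
\[
\la [f_{|\bd S},\bd S_\mu],\omega\ra=\la [f_1,S_{1,\mu_1}],\omega\ra-\la [f_0,S_{0,\mu_0}],\omega\ra,
\]
while Theorem \ref{thm:Stokes} combined with $d\omega=0$ gives
\[
\la [f_{|\bd S},\bd S_\mu],\omega\ra = \la [f,S_\mu],d\omega\ra = 0.
\]
As $\omega$ ranges over all closed $k$-forms, this proves that the two generalized currents define the same linear functional on $H^k(M,\RR)$, and by duality the same homology class in $H_k(M,\RR)$.

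For the immersed equivalence statement I would work directly at the level of currents, which in fact gives equality of the currents themselves and not merely of their homology classes. Suppose $h:S_0\to S_1$ is a $C^{r,s}$-diffeomorphism with $h_*\mu_0=\mu_1$ and $f_0=f_1\circ h$; since $h$ is a diffeomorphism of oriented solenoids, it is orientation-preserving on each leaf. Fix a flow-box cover $\{(V_j,\psi_j)\}$ of $S_1$ with a subordinate partition of unity $\{\sigma_j\}$. Then $U_j=h^{-1}(V_j)$ forms a flow-box cover of $S_0$, and $\rho_j=\sigma_j\circ h$ is a subordinate partition of unity on $S_0$. The restriction of $h$ to each transversal of $S_0$ pushes $\mu_{0,K(U_j)}$ forward to $\mu_{1,K(V_j)}$, while its restriction to each plaque of $U_j$ is an orientation-preserving diffeomorphism onto the corresponding plaque of $V_j$. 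Combining $f_0^*\omega=h^*(f_1^*\omega)$ with the leafwise and transversal changes of variables in the defining sum of $\la (f_0,S_{0,\mu_0}),\omega\ra$ rewrites that sum as the defining sum of $\la (f_1,S_{1,\mu_1}),\omega\ra$ for every $\omega\in\Omega^k(M)$; hence the currents coincide, and so do their homology classes.

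I do not expect any serious obstacle here. The only technical point that requires care is keeping the orientation conventions for $\bd S$ consistent, so that the boundary current really decomposes as $[f_1,S_{1,\mu_1}]-[f_0,S_{0,\mu_0}]$ with the correct sign. The fact that $\bd S_\mu$ inherits a well-defined transversal measure from $\mu$ via the inclusion $\cM_\cT(S)\subset\cM_\cT(\bd S)$ is built into the set-up preceding Theorem \ref{thm:Stokes}, so no further verification is needed there.
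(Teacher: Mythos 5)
Your proof is correct and follows essentially the same route as the paper: Stokes' theorem applied to the measured oriented cobordism for the first claim, and a change-of-variables argument through the measure- and orientation-preserving diffeomorphism $h$ for the second. The only difference is that you spell out the flow-box/partition-of-unity bookkeeping in the second case (and observe that it yields equality of currents, not just of homology classes), which the paper leaves implicit.
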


\begin{proof}
In the first case, let $\omega$ be a closed $k$-form on $M$, then
Stokes' theorem gives
  $$
  \la [f_1,S_{1,\mu_1}] , \omega \ra- \la [f_0,S_{0,\mu_0}] ,
  \omega \ra =
  \la [f_{|\bd S},\bd S_\mu] , \omega \ra = \la [f,S_\mu],d\omega\ra =0\, .
  $$

In the second case, $f_0=f_1\circ h$ implies that the actions of the
generalized currents over a closed form on $M$ coincide, since the
pull-back of the form to the solenoids agree through the
diffeomorphism $h$, and the integrals over the transversal measure
gives the same numbers, since the measures correspond by $h$.
\end{proof}

\begin{remark} \label{rem:que-numero-le-pongo}
 In both definitions \ref{def:14.homotopy} and \ref{def:14.cobordism},
 we do not need to require that $f$ be an
 immersion. Actually, the generalized current $[f,S_\mu]$ makes
 sense for any measured solenoid $S_\mu$ and any regular map $f:S\to
 M$, of class $C^{r,s}$ with $r\geq 1$. Theorem \ref{thm:homeo-RS}
 still holds with these extended notions.
\end{remark}

\section{Intersection theory of solenoids}\label{sec:intersection}

Let $M$ be a smooth $C^\infty$ oriented manifold.

\begin{definition} \textbf{\em (Transverse intersection)}
\label{def:15.1} Let $f_1:S_1\to M$, $f_2:S_2\to M$ be two immersed
solenoids in $M$. We say that they intersect transversally if, for
every $p_1\in S_1$, $p_2\in S_2$ such that $f_1(p_1)=f_2(p_2)$, the
images of the leaves through $p_1$ and $p_2$ intersect
transversally.
\end{definition}

If two immersed solenoids $f_1:S_1\to M$, $f_2:S_2\to M$, of dimensions
$k_1$, $k_2$ respectively, intersect transversally, we define the
intersection solenoid $f:S\to M$ as follows. The solenoid $S$ is:
  \begin{equation} \label{eqn:intersect-solenoids}
   S=\{ (p_1,p_2) \in S_1\x S_2 \ ; \ f_1(p_1)=f_2(p_2)\} \, .
   \end{equation}
and the map  $f:S\to M$ is given by
  \begin{equation} \label{eqn:immersion-intersect}
 f(p_1,p_2) =f_1(p_1)=f_2(p_2) , \quad (p_1,p_2) \in S\, .
   \end{equation}
We will see that $S$, the intersection solenoid, is indeed a
solenoid. Also the intersection $f:S\to M$ of the two immersed
solenoids $f_1:S_1\to M$, $f_2:S_2\to M$ is an immersed solenoid. In order
to prove this, we consider the intersection of the product solenoid
$F=f_1\x f_2:S_1\x S_2\to M\x M$ with the diagonal $\Delta \subset
M\x M$. So we have to analyze first the case of the intersection of
an immersed solenoid with a submanifold. The notion of transverse
intersection given in definition \ref{def:15.1} applies to this case
(a submanifold is an embedded solenoid).

\begin{lemma} \label{lem:intersect-submfd}
  Let $f:S\to M$ be an immersed $k$-solenoid in $M$ intersecting
  transversally an embedded closed submanifold $N\subset M$ of
  codimension $q$. Suppose that $S'=f^{-1}(N)\subset S$ is
  non-empty, then $f_{|S'}:S' \to M$ is an immersed $(k-q)$-solenoid in $N$.

  If $S$ and $N$ are oriented, so is $S'$.

  If $S$ has a transversal measure $\mu$, then $S'$ inherits
  a natural transversal measure, also denoted by $\mu$.
\end{lemma}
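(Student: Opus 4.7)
The plan is to construct a foliated manifold neighbourhood of $S'$ inside an ambient $W\supset S$ by locally straightening $\hat f^{-1}(N)$, and then read off the structures on $S'$ by restriction. So first, fix a foliated manifold $W$ of class $C^{r,s}$ defining the solenoid structure of $S$, together with a $C^{r,s}$-extension $\hat f:W\to M$ which is a leafwise immersion. Leafwise transversality with $N$ is a $C^1$-open condition, so after shrinking $W$ to a small neighbourhood of $S$ and using compactness of $S'$, for every $q\in\hat f^{-1}(N)$ the restriction of $\hat f$ to the leaf of $W$ through $q$ is transverse to $N$ at $q$.

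In a flow-box of $W$ with coordinates $(x,y)\in\RR^k\x\RR^l$ whose image meets $N$, take a tubular neighbourhood of $N$ in $M$ so that $N$ is cut out by a submersion $\pi$, and set $G(x,y):=\pi\circ\hat f(x,y)$. Transversality says $\partial G/\partial x$ has rank $q$ on $G^{-1}(0)$; after splitting $x=(x',x'')$ with $x''\in\RR^q$ and $\partial G/\partial x''$ invertible, the parametrized implicit function theorem yields a $C^{r,s}$-function $h(x',y)$ with $G^{-1}(0)=\{x''=h(x',y)\}$. The change of coordinates $(x',x'',y)\mapsto(x',\,x''-h(x',y),\,y)$ is a foliated $C^{r,s}$-diffeomorphism carrying $\hat f^{-1}(N)$ to the coordinate slice $\{x''=0\}$, which is a foliated submanifold of dimension $(k-q)+l$ with $(k-q)$-dimensional leaves and the same $\RR^l$ transversal direction. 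Hence $W':=\hat f^{-1}(N)$ is a $C^{r,s}$ foliated manifold and $(S',W')$ is a pre-solenoid, with flow-boxes $\varphi':S'\cap U\to D^{k-q}\x K(U)$, $(x',0,y)\mapsto(x',y)$; in particular the transversal space of $S'$ at any point coincides locally with that of $S$, and $f_{|S'}=\hat f_{|W'}$ is regular and a leafwise immersion into $N$.

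When $S$ and $N$ are oriented, the short exact sequence
 $$
 0\too T_p(\text{leaf of }S')\too T_p(\text{leaf of }S)\too T_{f(p)}M/T_{f(p)}N\too 0
 $$
orients the leaves of $S'$ transversally continuously from the orientations of $S$, $M$ and $N$. For a transversal measure $\mu=(\mu_T)$ on $S$, the slice $\{(x_0',0,y):y\in K(U)\}$ of a straightened flow-box is simultaneously a local transversal of $S$ and of $S'$ at $(x_0',0,y_0)$, so declaring $\mu_{T'}:=\mu_T$ on such slices is unambiguous. Holonomy invariance transfers from $S$ to $S'$: every path in a leaf of $S'$ is a path in a leaf of $S$ and the local transversals coincide, so $\Hol_{S'}$ embeds into $\Hol_S$ and the restricted family $(\mu_{T'})$ is automatically holonomy-invariant.

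The main difficulty lies in the local straightening: one must verify that the implicit function $h(x',y)$ genuinely has class $C^{r,s}$ (jointly $C^r$ in $x'$ with continuous $C^s$-variation in the transversal parameter $y$), so that the coordinate change is an admissible foliated diffeomorphism respecting the regularity of the atlas. All other verifications reduce to formal bookkeeping once the straightened model is in place.
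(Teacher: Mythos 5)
Your proof is correct and follows essentially the same route as the paper: extend $f$ to a leafwise immersion $\hat f:W\to M$ of a foliated neighbourhood, use leafwise transversality to realize $W'=\hat f^{-1}(N)$ as a foliated submanifold with $(k-q)$-dimensional leaves and adapted flow-boxes $D^{k-q}\x K(U)$, and then obtain the orientation from those of $S$, $M$, $N$ and the measure from the inclusion $\Hol_{S'}(T)\subset\Hol_S(T)$. The only (cosmetic) difference is that you produce the adapted coordinates by the parametrized implicit function theorem, writing $\hat f^{-1}(N)$ as a graph $x''=h(x',y)$, whereas the paper pulls back the defining functions $x_1,\dots,x_q$ of $N$ and completes them to leafwise coordinates; both yield the same straightened model.
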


\begin{proof}
First of all, note that $S'$ is a compact and Hausdorff space.

Let $W$ be a foliated manifold defining the solenoid structure of
$S$ such that there is a smooth map $\hat f:W\to M$ of class
$C^{r,s}$, extending $f:S\to M$, which is an immersion on leaves. By
definition, for any leaf $l\subset S$, $f(l)$ is transverse to $N$.
Thus reducing $W$ if necessary, the same transversality property
occurs for any leaf of $W$. The transversality of the leaves implies
that the map $\hat f:W\to M$ is transversal to the submanifold
$N\subset M$, meaning that for any $p\in W$ such that $\hat f(p)\in
N$,
 $$
 d\hat f(p)(T_pW) + T_{\hat f(p)}N =T_{\hat f(p)}M \, .
 $$
This implies that $W'=\hat f^{-1}(N)$ is a submanifold of $W$ of
codimension $q$ (in particular, $k-q\geq 0$). Moreover, it is
foliated by the connected components $l'$ of $l\cap \hat
f^{-1}(N)=(\hat f_{|l})^{-1}(N)$, where $l$ are the leaves of $W$.
By transversality of $\hat f$ along the leaves, $l'$ is a
$(k-q)$-dimensional submanifold of $l$. So $W'$ is a foliated
manifold with leaves of dimension $k-q$. This gives the required
solenoid structure to $S'=S \cap \hat f^{-1}(N)=f^{-1}(N)$.

Clearly, $f_{|S'}:S'\to N$ is an immersion (of class $C^{r,s}$)
since $\hat f_{|W'}:W'\to N$ is a smooth map which is an immersion
on leaves.

If $S$ and $N$ are oriented, then each intersection $l'=l\cap
\hat f^{-1}(N)$ is also oriented (using that $M$ is oriented as well). Therefore the
leaves of $S'$ are oriented, and hence $S'$ is an oriented solenoid.

  Let $p\in S'$ and let $U\cong D^k\x K(U)$ be a flow-box for $S$ around $p$.
  We can take $U$ small enough so that $f(U)$ is contained in a
  chart of $M$ in which $N$ is defined by functions $x_1=\ldots =x_q
  =0$. By the transversality property, the differentials
  $dx_1,\ldots, dx_q$ are linearly independent on each leaf $f(D^k\x
  \{y\})$, $y\in K(U)$. Therefore, $x_1, \ldots, x_q$ can be completed to a set of
  functions $x_1,\ldots, x_k$ such that $dx_1,\ldots, dx_k$ are a
  basis of the cotangent space for each leaf (reducing $U$ if
  necessary). Thus the pull-back of $x=(x_1,\ldots,x_k)$ to $U$
  give coordinates functions so that, using the coordinate $y\in K(U)$
  for the transversal direction, $(x,y)$ are coordinates for $U$,
  and $f^{-1}(N)$ is defined as $x_1=\ldots =x_q
  =0$. This means that
    $$
    S'\cap U \cong \{ (0,\ldots, 0, x_{q+1},\ldots, x_k,y)\in D^k\x
    K(U)\} \cong D^{k-q} \x K(U)\, .
    $$
Therefore any local transversal $T$ for $S'$ is a local transversal
for $S$, and any holonomy map for $S'$ is a holonomy map for $S$. So
  $$
  \Hol_{S'}(T) \subset \Hol_{S}(T) \, .
  $$
Hence a transversal measure for $S$ gives a transversal measure
for $S'$.
\end{proof}

Now we can address the general case.

\begin{proposition} \label{prop:15.3}
Suppose that $f_1:S_1\to M$, $f_2:S_2\to M$ are two immersed solenoids in $M$
intersecting transversally, and let $S$ be its intersection solenoid
defined in (\ref{eqn:intersect-solenoids}) and let $f$ be the map
(\ref{eqn:immersion-intersect}). If $S\neq \emptyset$,
then $f:S\to M$ is an immersed solenoid of dimension
$k=k_1+k_2-n$ (in particular, $k$ is a non-negative number).

If $S_1$ and $S_2$ are both oriented, then $S$ is also oriented.

If $S_1$ and $S_2$ are endowed with transversal measures $\mu_1$
and $\mu_2$ respectively,
then $S$ has an induced measure $\mu$.
\end{proposition}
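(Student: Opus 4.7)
The plan is to reduce the statement to Lemma \ref{lem:intersect-submfd} by the standard graph/diagonal trick: express $S$ as the preimage of the diagonal $\Delta \subset M\times M$ under the product map $F=f_1\x f_2$.

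First I would verify that, given foliated manifolds $W_1 \supset S_1$ and $W_2 \supset S_2$ defining the two solenoid structures, the product $W_1\x W_2$ is naturally a foliated manifold of dimension $(k_1+l_1)+(k_2+l_2)$, of class $C^{r,s}$, with $(k_1+k_2)$-dimensional leaves $l_1\x l_2$ (the product atlas has charts $\varphi_i\x\varphi_j$ whose changes of coordinates are again of the product form required by Definition \ref{def:W}). Flow-boxes of $W_1\x W_2$ can be taken as products of flow-boxes, so $S_1\x S_2\subset W_1\x W_2$ is a $(k_1+k_2)$-solenoid whose transversals and holonomy pseudogroup decompose as products. Product orientations on leaves produce an orientation of $S_1\x S_2$, and the product measure $(\mu_1)_{T_1}\x(\mu_2)_{T_2}$ on a product transversal $T_1\x T_2$ is invariant under the product holonomy pseudogroup; by a standard monotone class argument it extends to a transversal measure $\mu_1\x\mu_2$ for $S_1\x S_2$.

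Next, the product map $F=f_1\x f_2:S_1\x S_2\to M\x M$ extends to an immersion of foliated manifolds $\hat f_1\x\hat f_2:W_1\x W_2\to M\x M$, so $F$ is an immersed solenoid. The diagonal $\Delta\subset M\x M$ is a closed embedded submanifold of codimension $n$, canonically diffeomorphic to $M$ via either projection. The key point is that $F$ and $\Delta$ intersect transversally in the sense of Definition \ref{def:15.1} if and only if $f_1$ and $f_2$ do: for $(p_1,p_2)\in S_1\x S_2$ with $F(p_1,p_2)=(q,q)\in\Delta$, one has
\[
 dF_{(p_1,p_2)}\bigl(T_{p_1}l_1\oplus T_{p_2}l_2\bigr)+T_{(q,q)}\Delta = T_{(q,q)}(M\x M)
\]
exactly when $df_1(T_{p_1}l_1)+df_2(T_{p_2}l_2)=T_q M$, which is precisely the transversality hypothesis.

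Then I would identify the set $S$ of (\ref{eqn:intersect-solenoids}) with $F^{-1}(\Delta)\subset S_1\x S_2$ via $(p_1,p_2)\mapsto(p_1,p_2)$, and apply Lemma \ref{lem:intersect-submfd} to the immersed solenoid $F:S_1\x S_2\to M\x M$ and the submanifold $\Delta$. This immediately yields that $S$ is an immersed solenoid of dimension $(k_1+k_2)-n=k$, that an orientation is inherited from the orientations of $S_1\x S_2$ and $\Delta$ (via the orientation of $M\x M$), and that the transversal measure $\mu_1\x\mu_2$ restricts to a transversal measure $\mu$ on $S$. The composition of $F|_S:S\to\Delta$ with the canonical identification $\Delta\cong M$ recovers the map $f$ defined in (\ref{eqn:immersion-intersect}), which is therefore an immersion.

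The only step requiring real care is the construction of the product solenoid together with its product transversal measure, since one has to check that flow-boxes, transversals and holonomies of $W_1\x W_2$ are honestly products, and that the product measure on products of local transversals glues into a globally defined transversal measure invariant under the full holonomy pseudogroup of $S_1\x S_2$. Everything else is formal, because the orientability and transversality statements of Lemma \ref{lem:intersect-submfd} transport verbatim to the product situation.
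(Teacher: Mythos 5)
Your proposal is correct and follows essentially the same route as the paper: both reduce the statement to Lemma \ref{lem:intersect-submfd} by forming the product solenoid $S_1\x S_2$ with the product transversal measure (\ref{eqn:mu-product}), checking that transversality of $f_1,f_2$ is equivalent to transversality of $F=f_1\x f_2$ with the diagonal $\Delta\subset M\x M$, and identifying $f$ with $F|_S$ via $M\cong\Delta$. The extra care you devote to verifying the product foliated-manifold structure and the holonomy invariance of the product measure is a sound elaboration of what the paper leaves implicit.
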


\begin{proof}
  The product $S_1\x S_2$ is a
  $(k_1+k_2)$-solenoid and
   $$
   F=f_1\x f_2 :S_1\x S_2 \to M\x M
   $$
  is an immersion. Let $\Delta\subset M\x M$ be the diagonal. There is
  an identification (as sets)
   $$
   S = (S_1\x S_2) \cap F^{-1}(\Delta)\, .
   $$
 The condition that $f_1:S_1\to M$, $f_2:S_2\to M$ intersect transversally can
 be translated into that
 $F,S_1\x S_2\to M\x M$ and $\Delta$ intersect transversally in $M\x M$.

Therefore applying lemma \ref{lem:intersect-submfd},
$(S,F_{|S})$ is an immersed $k$-solenoid,
where $F_{|S}:S \to \Delta$ is defined as $F(x_1,x_2)=f_1(x_1)$.
Using the diffeomorphism $M\cong\Delta$, $x\mapsto (x,x)$,
$F_{|S}$  corresponds to $f:S\to M$. So $f:S\to M$ is an immersed $k$-solenoid.

If $S_1$ and $S_2$ are both oriented, then $S_1\x S_2$ is also oriented. By
lemma \ref{lem:intersect-submfd}, $S$ inherits an orientation.

If $S_1$ and $S_2$ are endowed with transversal measures $\mu_1$
and $\mu_2$, then $S_1\x S_2$ has a product transversal measure $\mu$.
For any local transversals $T_1$ and $T_2$ to $S_1$ and $S_2$, respectively,
$T=T_1\x T_2$ is a local transversal to $S_1\x S_2$ (and conversely). We
define
 \begin{equation} \label{eqn:mu-product}
 \mu_T= \mu_{1,T_1} \x \mu_{2,T_2} \, .
 \end{equation}
Now lemma \ref{lem:intersect-submfd} applies to give the transversal measure for $S$.
Note that the local transversals to $S$ are of the form $T_1\x T_2$, for some
local transversals $T_1$ and $T_2$ to $S_1$ and $S_2$.
\end{proof}

\begin{remark} \label{rem:15.4}
If $k_1+k_2=n$ then $S$ is a $0$-solenoid.
For a $0$-solenoid $S$, an orientation is a continuous
assignment $\epsilon:S\to \{\pm 1\}$ of
sign to each point of $S$.

Note also that for a $0$-solenoid $S$, $T=S$ is a transversal and a
transversal measure is a Borel measure on $S$.
\end{remark}

Let $f_1:S_1\to M$, $f_2:S_2\to M$ be two immersed solenoids in $M$
intersecting transversally, with $f:S\to M$ its intersection solenoid.
Let $p=(p_1,p_2)\in S$. Then we can choose flow-boxes $U_1=
D^{k_1}\x K(U_1)$ for $S_1$ around $p_1$ with coordinates
$(x_1,\ldots, x_{k_1}, y)$, and $U_2= D^{k_2}\x K(U_2)$ for $S_2$
around $p_2$ with coordinates $(x_1,\ldots, x_{k_2}, z)$, and
coordinates for $M$ around $f(p)$, such that
  $$
  \begin{aligned}
   f_1(x,y) & \, = (x_1,\ldots,x_{k_1+k_2-n},x_{k_1+k_2-n+1},\ldots,
   x_{k_1}, B_1(x,y),\ldots, B_{n-k_1}(x,y)) \, ,\\
   f_2(x,z) & \, = (x_1,\ldots, x_{k_1+k_2-n}, C_1(x,z),\ldots,
   C_{n-k_2}(x,z), x_{k_1+k_2-n+1},\ldots, x_{k_2})\, .
  \end{aligned}
  $$
Then $S$ is defined
locally as $D^{k_1+k_2-n}\x K(U_1)\x K(U_2)$ with coordinates $(x,y,z)=(x_1,\ldots,
x_{k_1+k_2-n},y,z)$ and
  $$
  \begin{aligned}
  f( & x_1,\ldots,
  x_{k_1+k_2-n},y,z)= \\ &= (x_1,\ldots, x_{k_1+k_2-n},C_1(x,z),\ldots,
  C_{n-k_2}(x,z),B_1(x,y),\ldots, B_{n-k_1}(x,y) ) \, .
  \end{aligned}
  $$

\medskip

\begin{theorem}\label{thm:product-dual-1}
Let $f:S_\mu\to M$ be an oriented measured $k$-solenoid immersed in $M$
intersecting transversally a closed subvariety $i:N\inc M$ of
codimension $q$, such that $S'=f^{-1}(N)\subset S$ is non-empty.
Consider the oriented measured $(k-q)$-solenoid immersed in $N$,
$f':S'\to M$, where $f'=f_{|S'}$. Then, under the restriction map
  \begin{equation} \label{eqn:restr}
  i^*: H^{n-k}_c(M) \to H^{(n-q)-(k-q)}_c(N)\, ,
  \end{equation}
the dual of the Ruelle-Sullivan homology class $[f,S_\mu]^*$ maps to
$[f',S'_\mu]^*$.
\end{theorem}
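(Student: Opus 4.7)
The plan is as follows. Let $\gamma$ be a closed $(n-k)$-form on $M$ representing the dual class $[f, S_\mu]^*$. Since $N$ is compact and oriented of dimension $n-q$, Poincar\'e duality on $N$ reduces the theorem to showing that
$$
\int_N i^*\gamma \wedge \beta = \la (f', S'_\mu), \beta \ra
$$
for every closed $(k-q)$-form $\beta$ on $N$; this is the identity I would aim to prove.

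The strategy is to produce a single test $k$-form on $M$ that realizes both sides. First I would pick a tubular neighborhood $V\subset M$ of $N$ with projection $\pi:V\to N$, and a Thom form $\eta\in \Omega^q(M)$, that is, a closed $q$-form supported in $V$, of integral $1$ along each normal fiber, representing the Poincar\'e dual of $N$ in $M$. The candidate test form is
$$
\tilde\omega := \pi^*\beta \wedge \eta,
$$
a closed $k$-form on $M$. Using $i^*\pi^* = \id$ and the defining property of $\eta$, one obtains
$$
\int_M \gamma \wedge \tilde\omega = \int_N i^*(\gamma \wedge \pi^*\beta) = \int_N i^*\gamma \wedge \beta,
$$
while, by the very definition of the Poincar\'e dual of the Ruelle--Sullivan class,
$$
\int_M \gamma \wedge \tilde\omega = \la (f, S_\mu), \tilde\omega \ra .
$$
So the theorem will be reduced to the identity $\la (f,S_\mu), \tilde\omega\ra = \la (f',S'_\mu), \beta\ra$.

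To evaluate the solenoidal pairing I would use the adapted flow-boxes built in the proof of Lemma \ref{lem:intersect-submfd}. Near each point of $S' = f^{-1}(N)$, an application of the implicit function theorem along the leaves yields a flow-box $U_i\cong D^k\times K(U_i)$ with coordinates $(x_1,\dots,x_k;y)$ and coordinates $(u,v)$ on $M$ with $N=\{v=0\}$ and $\pi(u,v)=u$, such that $f$ restricted to each leaf has the form $x\mapsto (u(x),x_1,\dots,x_q)$; in particular $L_y\cap f^{-1}(N)=\{x_1=\cdots=x_q=0\}$ and any transversal to $S'$ in $U_i$ is simultaneously a transversal to $S$, with the same measure $\mu_{K(U_i)}$ by the construction of the induced transversal measure on $S'$. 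Choosing $\eta$ of product form $\rho(|v|^2)\,dv_1\wedge\cdots\wedge dv_q$ with $\int\rho(|x'|^2)dx'=1$, the pullback to a leaf is $f^*\eta=\rho(|x'|^2)\,dx_1\wedge\cdots\wedge dx_q$, and a Fubini calculation on each leaf, combined with a partition of unity subordinated to a cover of $S$ by such adapted flow-boxes together with flow-boxes away from $f^{-1}(V)$ (which contribute nothing), gives
$$
\la(f,S_\mu),\tilde\omega\ra = \sum_i \int_{K(U_i)}\!\left(\int_{L'_y}\rho_i\,(f|_{S'})^*\beta\right)d\mu_{K(U_i)}(y) = \la(f',S'_\mu),\beta\ra,
$$
once the support of $\eta$ is allowed to concentrate along $N$. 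Since the cohomology class of $\tilde\omega$ is independent of the choice of Thom form, the left-hand pairing is independent of this limiting process, so the equality is exact.

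The main obstacle I expect is the uniform control of the fiber-concentrated integrals in the transversal parameter $y\in K(U_i)$, where $S$ is only of class $C^s$ transversally. This should be handled by dominated convergence, using the $C^{r,s}$ regularity of $f$, compactness of the closures of the flow-boxes, and the fact that the adapted coordinates above can be chosen uniformly in a neighborhood of each point of $S'$. Orientations align automatically because the Thom form is chosen to match the coorientation of $N$ and hence induces on $S'$ the orientation provided by Lemma \ref{lem:intersect-submfd}.
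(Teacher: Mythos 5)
Your proposal is correct and follows essentially the same route as the paper: reduce via Poincar\'e duality to the identity $\la (f,S_\mu),\pi^*\beta\wedge\tau\ra=\la (f',S'_\mu),\beta\ra$ for a Thom form $\tau$ of $N$, and verify it by a Fubini computation in the adapted flow-boxes of Lemma \ref{lem:intersect-submfd}. The only (minor) divergence is that you handle the discrepancy between $f^*\pi^*\beta$ and its restriction to $S'$ by concentrating the Thom form and invoking closedness of the current, whereas the paper uses a leafwise normal projection $\tilde\pi$ with $f\circ\tilde\pi$ homotopic to $\pi\circ f$; both devices work.
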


\begin{proof}
Let $U\subset M$ be a tubular neighbourhood of $N$ with projection
$\pi:U\to N$. Note that $U$ is diffeomorphic to the unit disc bundle
of the normal bundle of $N$ in $M$. Let $\tau$ be a Thom form for
$N\subset M$, that is a closed form $\tau \in \Omega^q(M)$ supported
in $U$, whose integral in any normal space $\pi^{-1}(n)$, $n\in N$,
is one. The dual of the map (\ref{eqn:restr}) under Poincar\'e
duality is the map
 $$
 H^{k-q}(N) \to H^k(M)\, ,
 $$
which sends $[\b]\in H^{k-q}(N)$ to $[\tilde\b]$, where
$\tilde\b=\pi^*\b \wedge \tau$ (this form is extended from $U$ to
the whole of $M$ by zero). So we only need to see that
 $$
 \la [f,S_\mu], \tilde\b\ra = \la [f',S'_\mu], \b\ra\, .
 $$

Take a covering of $S$ by flow-boxes $U_i\cong D^k \x K(U_i)\cong
D^{q}\x D^{k-q}\x K(U_i)$ so that $U_i'=U_i \cap S'$ is given by
$x_1=\ldots =x_q=0$. Making the tubular neighborhood $U\supset N$
smaller if necessary, we can arrange that $f^{-1}(U)\cap U_i$ is
contained in $D^{q}_r\x D^{k-q}\x K(U_i)$, for some $r<1$. It is
easy to construct a map $\tilde\pi: f^{-1}(U) \to f^{-1}(N)$ which
consists on projecting in the normal directions along the leaves.
Then $f\circ \tilde\pi$ and $\pi\circ f$ are homotopic.

Let $S_i'$ be a measurable partition of $S'$ with $S_i'\subset
U_i'$. We may assume that $S_i=\tilde\pi^{-1}(S_i')$ is contained in
$U_i$. 
The sets $S_i$ form a measurable partition containing $f^{-1}(U)$,
the support of $f^*\tilde{\beta}=f^*(\pi^*\b \wedge \tau)$. Then
  $$
  \begin{aligned}
  \la [f,S_\mu], \tilde\b\ra = & \,
  \sum_i \int_{K(U_i)} \left( \int_{S_i\cap (D^k\x \{y\})} f^*(\pi^*\b \wedge \tau) \right)
   d \mu_{K(U_i)}(y) \\
   = &\, \sum_i \int_{K(U_i)} \left( \int_{S_i\cap (D^k\x \{y\})} \tilde\pi^*f^*\b \wedge f^*\tau) \right)
   d \mu_{K(U_i)} (y)\\
 = &\, \sum_i \int_{K(U_i)} \left( \int_{S_i'\cap (D^{k-q}\x \{y\})} f^*\b \right)
   \left( \int_{f(D^q)} \tau \right) d \mu_{K(U_i)} (y)\\
 = &\, \sum_i \int_{K(U_i')} \left( \int_{S_i'\cap (D^{k-q}\x \{y\})} f^*\b \right)
   d \mu_{K(U_i')} (y)\\
 = &\, \la [f',S'_\mu], \b\ra\, .
 \end{aligned}
 $$
\end{proof}

\begin{theorem} \label{thm:product-dual-2}
 Suppose that $f_1:S_{1,\mu_1}\to M$, $f_2:S_{2,\mu_2}\to M$ are two
oriented measured immersed solenoids in $M$
 intersecting transversally, and let $f,S_\mu\to M$ be its intersection solenoid. Then
 the duals of the Ruelle-Sullivan homology classes satisfy
  $$
  [f,S_\mu]^* = [f_1,S_{1,\mu_1}]^* \cup [f_2,S_{2,\mu_2}]^* \, .
  $$
\end{theorem}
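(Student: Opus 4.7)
The plan is to reduce this to Theorem \ref{thm:product-dual-1} via the standard diagonal trick, which is already implicit in the proof of Proposition \ref{prop:15.3}. Set $F = f_1\times f_2 : S_1\x S_2 \to M\x M$, equip the product solenoid with the product transversal measure $\mu_1\x \mu_2$ as in equation (\ref{eqn:mu-product}), and let $\D\subset M\x M$ be the diagonal. Since $f_1$ and $f_2$ intersect transversally, $F$ is transverse to $\D$ and $F^{-1}(\D) = S$, with $F_{|S}$ corresponding to $f$ under $\D\iso M$, just as in the proof of Proposition \ref{prop:15.3}.

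First I would establish the product formula at the level of currents: for the product solenoid,
\[
 [F,(S_1\x S_2)_{\mu_1\x\mu_2}] \;=\; [f_1,S_{1,\mu_1}] \x [f_2,S_{2,\mu_2}]
\]
as currents in $\cC_{k_1+k_2}(M\x M)$. This is a direct computation: taking product flow-boxes $U_i\x V_j \iso (D^{k_1}\x K(U_i)) \x (D^{k_2}\x K(V_j))$ and a product partition of unity, the defining integral for $[F,\cdot]$ applied to a form $\alpha\wedge\beta$ with $\alpha\in \Omega^{k_1}(M)$, $\beta\in\Omega^{k_2}(M)$ factors by Fubini into the product of the two defining integrals for $[f_1,\cdot]$ and $[f_2,\cdot]$. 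Since forms of type $\alpha\wedge\beta$ span $\Omega^{k_1+k_2}(M\x M)$ in the sense of currents, this identifies the two currents. Passing to homology, the Poincar\'e dual in $H^{2n-k_1-k_2}(M\x M,\RR)$ is the cross product $[f_1,S_{1,\mu_1}]^* \x [f_2,S_{2,\mu_2}]^*$.

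Next I would apply Theorem \ref{thm:product-dual-1} to the immersed solenoid $F:(S_1\x S_2)_{\mu_1\x\mu_2}\to M\x M$ and the closed submanifold $\D\subset M\x M$ of codimension $q=n$. The hypotheses are satisfied (transverse intersection, $F^{-1}(\D)=S$ non-empty), and the intersection solenoid is precisely $(S_\mu,f)$ under $\D\iso M$. The theorem gives
\[
 i_\D^{\,*}\bigl([F,(S_1\x S_2)_{\mu_1\x\mu_2}]^*\bigr) \;=\; [f,S_\mu]^*,
\]
where $i_\D : \D\inc M\x M$ is the inclusion.

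Finally I would combine the two steps with the standard identity $i_\D^{\,*}(\a\x\b) = \a\cup \b$ for the diagonal in an oriented manifold (this is the usual characterization of cup product via the diagonal, using that $\PD$ intertwines $i_\D^{\,*}$ with intersection with $\D$). This yields
\[
 [f,S_\mu]^* \;=\; i_\D^{\,*}\bigl([f_1,S_{1,\mu_1}]^* \x [f_2,S_{2,\mu_2}]^*\bigr) \;=\; [f_1,S_{1,\mu_1}]^* \cup [f_2,S_{2,\mu_2}]^*,
\]
as desired. The main obstacle is the first step: one must verify carefully that the product transversal measure on $S_1\x S_2$ produces the product current, which amounts to a Fubini argument on flow-boxes together with a density argument to pass from decomposable forms $\a\wedge\b$ to all of $\Omega^{k_1+k_2}(M\x M)$. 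Everything else is then formal consequence of Theorem \ref{thm:product-dual-1} and the diagonal description of the cup product.
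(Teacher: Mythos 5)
Your proposal is correct and follows essentially the same route as the paper: establish $[F,(S_1\times S_2)_{\mu_1\times\mu_2}]=[f_1,S_{1,\mu_1}]\otimes[f_2,S_{2,\mu_2}]$ by a Fubini computation on product flow-boxes against decomposable forms (the paper does this at the level of homology via the K\"unneth decomposition, which is all that is needed), then pull back along the diagonal using Theorem \ref{thm:product-dual-1} and the identity $i_\Delta^*(\alpha\times\beta)=\alpha\cup\beta$. No substantive differences.
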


\begin{proof}
  Note that $[f_1,S_{1,\mu_1}]^* \in H^{n-k_1}_c(M)$
  and $[f_2,S_{2,\mu_2}]^* \in H^{n-k_2}_c(M)$, so
  $[f_1,S_{1,\mu_1}]^* \cup [f_2,S_{2,\mu_2}]^*$ and $[f,S]^*$ both live in
  $$
  H^{n-k_1+n-k_2}_c(M) =H^{n-k}_c(M)\, .
  $$

 Consider the immersed solenoid $(F, S_1\x
  S_2)$, where $F=f_1\x f_2:S_1\x S_2\to M\x M$ and $S_1\x S_2$
  has the transversal measure $\mu$ given by
  (\ref{eqn:mu-product}). Let us see that the following equality,
  involving the respective generalized currents,
  $$
  [F,(S_1\x S_2)_\mu]=
  [f_1,S_{1,\mu_1}] \ox [f_2,S_{2,\mu_2}] \in H_{k_1+k_2}(M\x M)
  $$
 holds. We prove this by
 applying both sides to $(k_1+k_2)$-cohomology
classes in $M\x M$. Using the K\"unneth decomposition it is enough
to evaluate on a form $\b=p_1^*\beta_1\wedge p_2^*\b_2$, where
$\b_1,\b_2\in H^*(M)$ are closed forms and $p_1,p_2:M\x M\to M$ are
the two projections. Let $\{U_i\}$, $\{V_j\}$ be open covers of
$S_1$, $S_2$ respectively, by flow-boxes, and let $\{\rho_{1,i}\}$,
$\{\rho_{2,j}\}$ be partitions of unity subortinated to such covers.
Then
 $$
 \begin{aligned}
 \la & [F,(S_1\x S_2)_\mu],\b\ra  = \\ &= \sum_{i,j} \int_{K(U_i)\x K(V_j)}
 \left( \int_{L_{y}\x L_z} (p_1^*\rho_{1,i})\,
 (p_2^*\rho_{2,j}) F^*(p_1^*\beta_1\wedge
 p_2^*\b_2) \right) d\mu_{K(U_i)\x K(V_j)} (y,z) \\ &\ = \sum_{i,j} \int_{K(U_i)\x K(V_j)}
 \left( \int_{L_{y}\x L_z} p_1^*(\rho_{1,i} f_1^*\beta_1) \wedge p_2^*( \rho_{2,j}
 f_2^*\b_2) \right) d\mu_{1,K(U_i)}(y) \, d\mu_{2,K(V_j)}(z) \\ &\ =\left( \sum_{i} \int_{K(U_i)}
 \left( \int_{L_{y}} \rho_{1,i} f_1^*\beta_1 \right) d\mu_{1,K(U_i)} (y) \right)
 \left( \sum_{j} \int_{K(V_j)}
 \left( \int_{L_{z}} \rho_{1,j} f_2^*\beta_2 \right) d\mu_{2,K(V_j)} (y) \right) \\
 &\ =\la [f_1,S_{1,\mu_1}],\b_1\ra \, \la [f_2,S_{2,\mu_2}],\b_2\ra\, ,
 \end{aligned}
 $$
as required.

Now we are ready to prove the statement of the theorem. Let
$\varphi:M\to \Delta$ be the natural diffeomorphism of $M$ with the
diagonal $\Delta \subset M\x M$, and let $i:\Delta \inc M\x M$ be
the inclusion. Then, using theorem \ref{thm:product-dual-2},
  $$
  \begin{aligned} \
  [f,S_\mu]^* \, &=  [\varphi \circ f, S_\mu]^* = i^* ([F,(S_1\x
  S_2)_\mu]^*)= \\
   &= i^* ([f_1,S_{1,\mu_1}]^* \ox [f_2,S_{2,\mu_2}]^* )
  = [f_1,S_{1,\mu_1}]^* \cup [f_2,S_{2,\mu_2}]^* \, .
  \end{aligned}
  $$
\end{proof}

\medskip

Let us look more closely to the case where $k_1+k_2=n$.
We assume that $f_1:S_{1,\mu_1}\to M$, $f_2:S_{2,\mu_2}\to M$
are two oriented immersed measured solenoids of
dimensions $k_1,k_2$ respectively,
which intersect transversally. 
Let $f:S_\mu\in M$ be the intersection $0$-solenoid of
$f_1:S_{1,\mu_1}\to M$ and $f_2:S_{2,\mu_2}\to M$.

\begin{definition}\textbf{\em (Intersection index)}
\label{def:intersection-index}
At each point $x=(x_1,x_2)\in S$, 
the intersection index $\epsilon(x_1,x_2)\in \{\pm 1\}$
is the sign of the intersection of the leaf of $S_1$ through
$x_1$ with the leaf of $S_2$ through $x_2$. 
The continuous function $\epsilon:S\to
\{\pm 1\}$ gives the orientation of $S$.
\end{definition}

Recall that the $0$-solenoid $f:S_\mu\to M$ comes equipped with a natural measure $\mu$
(for a $0$-solenoid the notions of measure and transversal measure coincide).
If $x=(x_1,x_2)\in S$, then locally around $x$, $S$ is homeomorphic to
$T=T_1\x T_2$, where $T_1$ and $T_2$ are small local transversals of $S_1$
and $S_2$ at $x_1$ and $x_2$, respectively. The measure $\mu_T$ is the
product measure  $\mu_{1,T_1} \x \mu_{2,T_2}$.

\begin{definition}\textbf{\em (Intersection measure)}\label{def:intersection-measure}
The intersection measure is the transversal measure $\mu$ of the
intersection solenoid $f:S_\mu\to M$, induced by those of
$f_1:S_{1,\mu_1}\to M$ and $f_2:S_{2,\mu_2}\to M$.

\end{definition}

\begin{definition}\textbf{\em (Intersection pairing)} \label{def:intersection-pairing}
We define the intersection pairing as the real number
 $$
 (f_1, S_{1,\mu_1}) \cdot (f_2,S_{2,\mu_2})=
 \int_{S } \epsilon  \ d \mu \, .
 $$
\end{definition}

\begin{theorem} \label{thm:product-RS}
If $f_1:S_{1,\mu_1}\to M$ and $f_2:S_{2,\mu_2}\to M$ are two oriented immersed measured solenoids of
dimensions $k_1,k_2$ respectively,
which intersect transversally, such that $k_1+k_2=n$. Then
 $$
 (f_1, S_{1,\mu_1}) \cdot (f_2,S_{2,\mu_2})=
 [f_1, S_{1,\mu_1}]^* \cdot [f_2,S_{2,\mu_2}]^*\, .
 $$
\end{theorem}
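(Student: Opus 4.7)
The plan is to reduce the identity to Theorem \ref{thm:product-dual-2} by pairing both cohomology classes against the fundamental class of $M$. By Theorem \ref{thm:product-dual-2} applied to the intersection solenoid $f:S_\mu\to M$,
$$
[f_1,S_{1,\mu_1}]^* \cup [f_2,S_{2,\mu_2}]^* = [f,S_\mu]^* \in H^n(M,\RR),
$$
so the right hand side of the statement is exactly $\int_M [f,S_\mu]^*$, that is, the augmentation $H_0(M,\RR)\to\RR$ applied to $[f,S_\mu]$. The task therefore reduces to computing the generalized current of the $0$-solenoid $f:S_\mu\to M$.

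Next I would unpack Definition \ref{def:Ruelle-Sullivan} in the degenerate case $k=0$. By Remark \ref{rem:15.4}, the flow-boxes of a $0$-solenoid have trivial leaf direction, the transversal measure is a genuine Borel measure $\mu$ on $S$, and an orientation is the locally constant sign function $\epsilon:S\to\{\pm 1\}$ of Definition \ref{def:intersection-index}. Interpreting ``integration over the $0$-dimensional oriented leaf $L_y=\{y\}$'' of a $0$-form $g$ as the signed evaluation $\epsilon(y)\,g(f(y))$, and summing the integrals of Definition \ref{def:Ruelle-Sullivan} over a measurable partition of $S$ subordinate to a flow-box cover, one obtains
$$
\langle (f,S_\mu), g\rangle = \int_S \epsilon(x)\, g(f(x))\, d\mu(x)
$$
for every $g\in\Omega^0(M)=C^\infty(M)$. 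Specialising to the constant function $g\equiv 1$ and comparing with Definition \ref{def:intersection-pairing} gives
$$
\langle [f,S_\mu],\, 1\rangle \;=\; \int_S \epsilon\, d\mu \;=\; (f_1,S_{1,\mu_1})\cdot (f_2,S_{2,\mu_2}).
$$

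Finally, I would close the loop via Poincar\'e duality: the isomorphism $H_0(M,\RR)\cong H^n(M,\RR)$ is characterised by $\int_M \alpha^* \wedge \omega = \langle \alpha,\omega\rangle$ for $\alpha\in H_0(M,\RR)$ and $\omega\in H^0(M,\RR)$. Taking $\omega=1$ gives $\int_M [f,S_\mu]^* = \langle [f,S_\mu],\, 1\rangle$; chaining this with the preceding displayed identities and with Theorem \ref{thm:product-dual-2} yields the claimed equality.

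The one point that requires a little care is the $k=0$ interpretation of the Ruelle-Sullivan current, since Definition \ref{def:Ruelle-Sullivan} was written for solenoids with genuine leaves. However, Remark \ref{rem:15.4} already fixes the orientation and measure conventions in this dimension, and the signed counting formula above is the unique consistent extension; once this is granted, the rest of the argument is a formal unwinding of Poincar\'e duality and the already proved product formula.
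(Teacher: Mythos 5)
Your proposal is correct and follows essentially the same route as the paper: invoke Theorem \ref{thm:product-dual-2} to identify the cup product with $[f,S_\mu]^*$, evaluate against $1\in H^0(M,\RR)$, and unwind the generalized current of the $0$-solenoid as the signed integral $\int_S\epsilon\,d\mu$. Your extra care in spelling out the $k=0$ conventions and the Poincar\'e duality pairing is just a more explicit version of the paper's one-line remark that ``the pull-back of a function gets multiplied by the orientation $\epsilon$''.
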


\begin{proof}
By theorem \ref{thm:product-dual-2},
 $$
 [f_1, S_{1,\mu_1}]^* \cup [f_2,S_{2,\mu_2}]^* =
 [f,S_\mu]^* \in H^n_c(M,\RR)\, .
 $$
The intersection product $ [f_1, S_{1,\mu_1}]^* \cdot
[f_2,S_{2,\mu_2}]^*$ is obtained by evaluating this cup product on
the element $1\in H^0 (M,\RR)$, i.e.
  $$
  [f_1, S_{1,\mu_1}]^* \cdot [f_2,S_{2,\mu_2}]^* =
  \la [f,S_\mu],1\ra = \int_{S} f^*(1) d\mu(x) =
  \int_{S} \epsilon \, d\mu \, ,
  $$
since the pull-back of a function gets multiplied by the orientation
of $S$, which is the function $\epsilon$.
\end{proof}

\medskip

When the solenoids are uniquely ergodic we can sometimes recover this
intersection index by a natural limiting procedure.
Recall that we say that a Riemannian solenoid $S$ is of controlled growth
(see definition 3.3 in \cite{MPM2}) if there is a leaf $l\subset S$, a
point $p\in l$ such that the Riemannian balls $l_n\subset l$,
of some radius $R_n\to \infty$, satisfy that for each flow-box $U$ in a
finite covering of $S$ the number of incomplete horizontal discs in $U\cap l_n$
is negligeable with respect to the number of complete horizontal discs in
$U\cap l_n$. Then, if $\mu_n$ is the normalized measure corresponding to $l_n$,
the limit $\mu=\lim_{n\to\infty} \mu_n$ is the unique Schwartzman measure
(corollary 3.7 in \cite{MPM2}).

\begin{theorem}
Let $f_1:S_{1,\mu_1}\to M$ and $f_2:S_{2,\mu_2}\to M$ be two immersed, oriented,
uniquely ergodic solenoids with controlled growth transversally intersecting.
Let $l_1\subset S_{1}$ and $l_2\subset
S_{2}$ be two arbitrary leaves. Choose two base points $x_1\in
l_1$ and $x_2 \in l_2$, and fix Riemannian exhaustions $(l_{1,n})$
and $(l_{2,n})$. Define
 $$
 (f_1,l_{1,n}) \cdot (f_2,l_{2,n})=\frac{1}{M_n} \sum_{p=(p_1,p_2)\in
 l_{1,n}\times l_{2,n} \atop f_1(p_1)=f_2(p_2)} \epsilon (p)\, ,
 $$
where $M_n =\Vol_{k_1}(l_{1,n})\cdot \Vol_{k_2}(l_{2,n})$.

Then
 $$
 \lim_{n\to +\infty }
 (f_1,l_{1,n}) \cdot (f_2,l_{2,n})= (f_1,S_{1\mu_1}) \cdot (f_2,S_{2,\mu_2}) \, .
 $$
In particular, the limit exists and is independent of the choices of $l_1$,
$l_2$, $x_1$, $x_2$ and the radius of the Riemannian exhaustions.
\end{theorem}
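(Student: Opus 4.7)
The plan is to reinterpret the finite count as the integral of a Thom form of the diagonal $\Delta\subset M\x M$, to split that integral via K\"unneth, and then to apply the leafwise equidistribution granted by unique ergodicity plus controlled growth on each factor. The starting point is the product immersion $F=f_1\x f_2:S_1\x S_2\to M\x M$; by Proposition \ref{prop:15.3} it is transverse to $\Delta$ and the intersection $0$-solenoid $f:S_\mu\to M$ is identified with $(F^{-1}(\Delta),F|)$, with product transversal measure $\mu=\mu_1\x\mu_2$.

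First I would fix a closed Thom form $\tau\in\Omega^n(M\x M)$ for $\Delta$ supported in a small tubular neighbourhood. By transversality, near each $(p_1,p_2)\in(l_1\x l_2)\cap F^{-1}(\Delta)$ the leafwise integral of $F^*\tau$ on a neighbourhood in $l_1\x l_2$ equals the intersection sign $\epsilon(p_1,p_2)$. Summing these contributions gives
  $$
  \sum_{\substack{(p_1,p_2)\in l_{1,n}\x l_{2,n}\\ f_1(p_1)=f_2(p_2)}} \epsilon(p_1,p_2) \;=\; \int_{l_{1,n}\x l_{2,n}} F^*\tau \;+\; E_n,
  $$
where $E_n$ accounts for intersection points whose Thom tube sticks out across $\bd(l_{1,n}\x l_{2,n})$; only points within a bounded distance of that boundary contribute, and I would show $E_n=o(M_n)$ using controlled growth on each $l_i$.

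Next, by K\"unneth one has, up to an exact form $d\eta$, a decomposition $\tau=\sum_i p_1^*\a_i\wedge p_2^*\b_i$ with $\a_i\in\Omega^{k_1}(M)$ and $\b_i\in\Omega^{k_2}(M)$ closed (only the bidegree $(k_1,k_2)$ summand survives the integration on the $n$-dimensional leaf $l_{1,n}\x l_{2,n}$). Stokes and Fubini give
  $$
  \int_{l_{1,n}\x l_{2,n}} F^*\tau \;=\; \sum_i \Bigl(\int_{l_{1,n}} f_1^*\a_i\Bigr)\Bigl(\int_{l_{2,n}} f_2^*\b_i\Bigr) \;+\; \int_{\bd(l_{1,n}\x l_{2,n})} F^*\eta,
  $$
with the boundary integral again $o(M_n)$. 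The key dynamical input, contained in \cite[Corollary~3.7]{MPM2} for uniquely ergodic solenoids of controlled growth, is the equidistribution
  $$
  \frac{1}{\Vol_{k_i}(l_{i,n})}\int_{l_{i,n}} f_i^*\gamma \;\longrightarrow\; \la[f_i,S_{i,\mu_i}],\gamma\ra
  $$
for every closed $k_i$-form $\gamma$ on $M$. Dividing by $M_n$ and letting $n\to\infty$ one concludes
  $$
  \lim_{n\to\infty}(f_1,l_{1,n})\cdot(f_2,l_{2,n}) \;=\; \sum_i \la[f_1,S_{1,\mu_1}],\a_i\ra\la[f_2,S_{2,\mu_2}],\b_i\ra,
  $$
which in particular gives existence of the limit and its independence of the choices of leaves, base points and radii.

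Finally, the right hand side equals $\la[f_1,S_{1,\mu_1}]\ox[f_2,S_{2,\mu_2}],[\tau]\ra$; by Theorem \ref{thm:product-dual-2} it coincides with $\la[F,(S_1\x S_2)_\mu],\tau\ra$, and by the Thom form computation in the proof of Theorem \ref{thm:product-dual-1} this is $\la[f,S_\mu],1\ra=\int_S\epsilon\,d\mu=(f_1,S_{1,\mu_1})\cdot(f_2,S_{2,\mu_2})$. The main obstacle is the quantitative control of the two $o(M_n)$ error terms: both reduce to showing that the number of horizontal discs of $l_i$ meeting a bounded neighbourhood of $\bd l_{i,n}$ is asymptotically negligible compared to the total disc count, a quantitative refinement of controlled growth that I would extract by a closer look at the argument behind Corollary 3.7 in \cite{MPM2}.
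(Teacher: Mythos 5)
Your argument is correct in outline but follows a genuinely different, cohomological route from the paper's. The paper's proof is short and purely measure-theoretic: by unique ergodicity and controlled growth, the normalized atomic transversal measures $\mu_{i,n}$ attached to the exhaustions converge weakly to $\mu_i$ (the Schwartzman measure result, corollary 3.7 of \cite{MPM2}), hence $\mu_{1,n}\x\mu_{2,n}\to\mu_1\x\mu_2=\mu$ in each flow-box; since the finite sum defining $(f_1,l_{1,n})\cdot(f_2,l_{2,n})$ is exactly the integral of the continuous function $\epsilon$ (bounded by $1$) over the $0$-solenoid $S$ against $\mu_{1,n}\x\mu_{2,n}$, convergence to $\int_S\epsilon\,d\mu$ is immediate. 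You instead pass through a Thom form of the diagonal, a K\"unneth decomposition, and the current-level equidistribution $\frac{1}{\Vol_{k_i}(l_{i,n})}\int_{l_{i,n}}f_i^*\gamma\to\la[f_i,S_{i,\mu_i}],\gamma\ra$, then close the loop with Theorem \ref{thm:product-dual-2}. The dynamical input is the same in both cases (your equidistribution is just the pairing of the Schwartzman convergence with a closed form), but your route carries two extra $o(M_n)$ boundary estimates --- the intersection points whose Thom tube crosses $\bd(l_{1,n}\x l_{2,n})$, and the Stokes term coming from the exact correction in the K\"unneth decomposition --- which you rightly flag as the main obstacle. Both do follow from controlled growth (points and boundary spheres near $\bd l_{i,n}$ live in the incomplete discs, whose count is negligible), but this is precisely the bookkeeping the paper sidesteps by working with measures on transversals rather than with forms on leaves. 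What your approach buys is an explicit link to the cup-product formula and a template that could extend to non-complementary dimensions; what it costs is the boundary analysis, which would need to be written out to make the proof complete.
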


\begin{proof}
The key observation is that because of the unique ergodicity, the
atomic transversal measures associated to the normalizad $k$-volume
of the Riemannian exhaustions (name them $\mu_{1,n}$ and
$\mu_{2,n}$) are converging to $\mu_1$ and $\mu_2$, respectively.
In particular, in each local flow-box we have
 $$
 \mu_{1,n}\times \mu_{2,n}\to \mu_1\times \mu_2 =\mu \, .
 $$
Therefore the average defining $(f_1,l_{1,n})\cdot (f_2,l_{2,n})$
converges to the integral defining
$(f_1,S_{1\mu_1})\cdot (f_2,S_{2,\mu_2})$ since $\epsilon$ is a
continuous and integrable function (indeed bounded by $1$).
\end{proof}

\begin{remark}
The previous theorem and proof work in the same form for
ergodic solenoids, provided that we know that the Schwartzman limit
measure for almost all leaves is the given ergodic measure. This is
simple to prove for ergodic solenoids with trapping regions mapping to a
contractible ball in $M$
(cf.\ theorem 7.12 in \cite{MPM2}).
\end{remark}


\bigskip

We end up this section with a perturbation result.
We want to prove that we can achieve transversality for a
large class of solenoids by a suitable homotopy. The solenoids that we
have in mind are those whose transversal is a Cantor set.

\begin{theorem} \label{thm:perturbing}
  Let $f:S\to M$ be a solenoid whose transversals are Cantor sets, and let $N\subset M$
be a smooth closed submanifold. Then we can homotop $f:S\to M$ so that $N$ and $S$
intersect transversely.
\end{theorem}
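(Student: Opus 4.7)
The plan is to adapt Thom's transversality theorem to the leafwise setting, using the total disconnectedness of the Cantor transversals to perform independent perturbations on clopen pieces of each transversal and then glue them smoothly.

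Working in a single flow-box $U\cong D^k\times K$ of $S$, with $K$ a Cantor set, and in a local chart of $M$ so that perturbations can be added, the key observation is the following. For each $y\in K$ the leaf map $f_y:D^k\to M$, $f_y(x)=f(x,y)$, admits by Thom's theorem an arbitrarily $C^1$-small smooth perturbation $\varphi_y:D^k\to\RR^n$ for which $f_y+\varphi_y$ is transverse to $N$. Since leafwise transversality to $N$ is a $C^1$-open condition, the same $\varphi_y$ makes $f_{y'}+\varphi_y$ transverse to $N$ for every $y'$ in some open neighborhood $O_y\subset K$ of $y$. Compactness of $K$ gives a finite subcover $\{O_{y_1},\dots,O_{y_p}\}$; since $K$ is zero-dimensional this can be refined to a clopen partition $K=K_1\sqcup\cdots\sqcup K_p$ with $K_j\subset O_{y_j}$. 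Choosing pairwise disjoint open neighborhoods $V_j\subset\RR^l$ of the $K_j$ together with smooth bump functions $\chi_j$ equal to $1$ on $K_j$ and supported in $V_j$, and setting
\[
\tilde f(x,y)=\hat f(x,y)+\sum_{j=1}^p \chi_j(y)\,\varphi_{y_j}(x),
\]
one obtains a $C^{r,s}$ map whose restriction to any leaf $D^k\times\{y\}$ with $y\in K_j$ equals $f_y+\varphi_{y_j}$ and is therefore transverse to $N$. The straight-line interpolation from $\hat f$ to $\tilde f$ in the chart provides the corresponding local homotopy.

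To globalize, cover $S$ by finitely many flow-boxes $U_1,\dots,U_m$ with slightly smaller sub-flow-boxes $U_i'$ such that $\overline{U_i'}\subset U_i$ and $\bigcup U_i'=S$, arranged so that each $f(U_i)$ lies in a chart of $M$. Iterate: having homotoped $f$ to be leafwise transverse to $N$ on $\bigcup_{j<i}\overline{U_j'}$, apply the flow-box construction in $U_i$ with the perturbation additionally multiplied by a cut-off supported in $U_i$ and chosen $C^1$-small enough that the $C^1$-openness of leafwise transversality preserves the property already achieved on $\bigcup_{j<i}\overline{U_j'}$. After $m$ steps, $f$ has been homotoped to a map that intersects $N$ transversally everywhere on $S$, and the concatenation of the $m$ local homotopies gives the required global one.

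The main technical obstacle is precisely the simultaneous leafwise requirement: applying Thom's theorem directly to the ambient extension $\hat f:W\to M$ would only make $\hat f$ transverse to $N$ as a map from the foliated manifold, which is strictly weaker than transversality of the restriction to every leaf. The Cantor structure of the transversals is what allows the construction, since the clopen partitioning lets one prescribe the perturbation independently on different pieces of $K$ while keeping everything smooth in $y$. This accords with the obstruction (persistence of homoclinic tangencies) mentioned before the statement, which shows that the analogous result fails once the transversal has connected structure.
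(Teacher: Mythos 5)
Your proposal is correct and follows essentially the same strategy as the paper's proof: perturb leafwise within a flow-box, use openness of transversality plus compactness and zero-dimensionality of the Cantor transversal to reduce to a finite clopen partition on which the perturbations can be prescribed independently yet smoothly, and then globalize over a finite cover of flow-boxes by successive perturbations small enough to preserve transversality already achieved. The only cosmetic difference is that the paper takes the local perturbation to be a constant translation $v$ (via Sard) cut off by a bump function in the leaf variable, whereas you allow a general Thom perturbation $\varphi_{y_j}(x)$ cut off by bump functions in the transversal variable; both implementations are equivalent.
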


\begin{proof}
  Let $f:S\to M$ be an immersion of the solenoid into a manifold $M$. Recall that
this means that the diferential of $f$ along leaves is injective.

   Let $p$ be a point in the solenoid. We want to perturb $f$ in a neighbourhood of $p$.
Consider a flow-box $U\subset S$ of the form $D^k_{1+r}\x T$, where $T$ is a Cantor set,
and $D^k_{1+r}$ is a $k$-disc of radius $1+r$, for some small
real number $r>0$.
Consider also a coordinate chart $(x_1,\ldots, x_n)$
for $M$ so that $N$ is given by $x_1=\ldots=x_q=0$, and
$p=(0,y_0)$. Define the composition
 $$
 f: D^k_{1+r}\x T \to V \to \RR^p\, .
 $$

The transversality of the leaf $L_y=D^k_1 \x \{y\}$ to $N$ is equivalent to the
transversality of the map $f_y:=f(\,\cdot\, ,y)$ to zero.

For any $\epsilon>0$ small enough, there is a vector $v\in \RR^p$ so that
$f_{y_0}$ is transversal to $-v$. Therefore $f_{y_0}+v$ is transversal to zero
(that is, if $f_{y_0}(x)+v=0$ then $df_{y_0}(x)$ is surjective). Moreover,
there is an open neighbourhood of $y_0$, $T_0\subset T$, where this transversality still holds.

Now take a bump function $\rho(x)$ which is one over $D_1^k$ and it is zero near the boundary
of $D_{1+r}^k$. The map
 $$
 \hat{f}(x,y)= \left\{ \begin{array}{ll} f(x,y) + \rho(x)v, \qquad & x\in D_{1+r}^k, y\in T_0 \\
                        f(x,y), & \text{otherwise}
                       \end{array} \right.
 $$
is smooth (here it is where we use that $T$ is a Cantor set), transversal to $N$ along $D^k_1\x T_0$.

Repeating this process, we can find a finite cover $T=\sqcup \, T_j$, and define the perturbations
indepedently on $D_{1+r}^k\x T_j$.

\bigskip

Finally, we have managed to achieve transversality on $V=D^k_1\x T$, perturbing on $D_{1+r}^k\x T$.
What we do now is to use a finite cover of $S$ with subsets as $V$, and perturb successively.
At each step we take a perturbation of norm small enough so that
this does not destroy the perturbation over the set where it was previously achieved.

\end{proof}

\begin{remark} \label{rem:prob-}
  We can construct an example where it is not possible to perturb
  two solenoids (at least in a differentiable way) with transversal 
  Cantor sets so that they intersect transversally.

  Consider $M=\RR^2$, and let $K_1,K_2\subset [0,1]$ be two Cantor sets.
  Let $S_1$ be given by the leaves $(x,y)$, $x\in \RR$, $y\in K_1$.
  Let $S_2$ be given by the leaves $(x,x^2+ z)$, $x\in \RR$, $z\in K_2$.
 These two solenoids intersect non-transversally at the points determined by
 $x=0$, $y=z \in K_1\cap K_2$.

 Suppose that we have small perturbations $S_1',S_2'$ of $S_1,S_2$, respectively.
 Then $S_1'$ is defined by leaves of the form $(x, y+f_1(x,y))$, $x\in \RR$, $y\in K_1$, and $S_2'$ by leaves
 of the form $(x, x^2+z + f_2(x,z))$, $x\in \RR$, $z\in K_2$, where $f_j$ is a smooth function on $\RR \x
 K_j$, having small norm, $j=1,2$ (recall that a smooth function on $\RR\x K_j$ extends as a smooth function on
 some neighbourhood of it). Composing with a suitable diffeomorphism of $\RR^2$, we can
 suppose that $f_1=0$. So we are looking for non-transversal intersections of
 $S_1'=\{(x,y)\, | \, x\in \RR,  y\in K_1\}$ and
 $S_2'=\{(x,x^2+z+g(x,z))\, | \, x\in \RR,  z\in K_2\}$, $g$ some small smooth function on $\RR \x K_2$.
 These are obtained by solving 
  $$
  2x+ g_x(x,z)=0 , \qquad y=x^2+z+g(x,z)\, .
  $$ 
 The equation $2x+ g_x(x,z)=0$
 can be solved as $x=\phi(z)$, for some (small) smooth function $\phi$.
Write $r(z)=\phi(z)^2 +z + g(\phi(z),z)$, which is a smooth function on $K_2$
 close to $r_0(z)=z$. This defines a smooth isotopy of $K_2$. Let $K_2'=r(K_2)$.
The points of non-transversal intersections of $S_1'$ and $S_2'$ are 
given by solving $y=r(z)$, so they correspond to the points in $K_1\cap K_2'$.

To guarantee that $K_1\cap K_2'\neq \emptyset$, just choose $K_1$, $K_2$ two Cantor sets with
positive Lebesgue measure such that $\mu(K_1)+\mu(K_2)>1$. As $K_2'$ is a small smooth perturbation
of $K_2$, the measure of $K_2'$ is close to that of $K_2$. So $K_1$ and $K_2'$ must intersect.
\end{remark}

\section{Almost everywhere transversality} \label{sec:aet}

The intersection theory developed in section \ref{sec:intersection}
is not fully satisfactory since there are examples of solenoids
(e.g.\ foliations) which do not intersect transversally, and cannot
be perturbed to do so. 
Even in the case of solenoids whose transversal structure is Cantor, sometimes
it is not possible to perturb smoothly the solenoid to make them intersect transversally, as 
remark \ref{rem:prob-} shows. Another example is given by 
the persistence of homoclinic tangencies for stable and unstable foliations shows (see \cite{Takens}).

However, a weaker notion is enough to
develop intersection theory for solenoids. Indeed, the intersection
pairing can also be defined for oriented, measured solenoids
$f_1:S_{1,\mu_1}\to M$ and $f_2:S_{2,\mu_2}\to M$, immersed in an oriented
$n$-manifold $M$, with $k_1+k_2=n$, $k_1=\dim S_1$, $k_2=\dim S_2$,
which intersect transversally almost everywhere in the following
sense:

\begin{definition}\textbf{\em (Almost everywhere transversality)}\label{def:almost-transverse}
Let $f_1:S_{1,\mu_1}\to M$ and $f_2:S_{2,\mu_2}\to M$ be two measured
immersed oriented solenoids. They intersect almost everywhere
transversally if
the set
 $$
 \begin{aligned}
 F=\{(p_1,p_2) & \,\in S_1\x S_2 \, ; \\ &\,  f_1(p_1)=f_2(p_2),
 df_1(p_1) (T_{p_1}S_1)+df_2(p_2) (T_{p_2}S_2) \neq T_{f_1(p_1)}M \}
 \subset S_1\x S_2
  \end{aligned}
 $$
of non-transversal intersection points satisfies:
\begin{enumerate}
\item[(1)] every point $p\in F$ is an isolated point of
 $$
 S= \{(p_1,p_2) \in S_1\x S_2 \, ; \, f_1(p_1)=f_2(p_2) \}
 \subset S_1\x S_2
 $$
 in the leaf of $S_1\x S_2$ through $p$.
\item[(2)] $F$ is null-transverse in
$S_1\x S_2$ (with the natural product transversal measure $\mu$), i.e.
if the set of leaves of $S_1\x S_2$ intersecting $F$ has zero
$\mu$-measure.
\end{enumerate}
\end{definition}

It is useful to translate to $S_1\times S_2$ the meaning of
almost everwywhere transversality.

\begin{definition}\textbf{\em (Almost everywhere transversality)}\label{def:almost-transverse-2}
Let $f: S_{\mu}\to M$ be a measured
immersed oriented $k$-solenoid and $N\subset M$ a closed submanifold of codimension $k$.
They intersect almost everywhere
transversally if the set
 $$
 F=\{p \in S \, ;  \, f (p)\in N, \,
 df(p) (T_{p}S)+ T_{f(p)}N \neq T_{f(p)} M \}
 \subset S
 $$
of non-transversal intersection points satisfies that:
 \begin{itemize}
 \item[(1)] every point $p\in F$ is isolated as a point of $S'$ in the leaf of $S$ through $p$,
 \item[(2)] $F\subset S_\mu$ is null-transverse, i.e.\
 for any flow-box $U=D^k\x K(U)$, the projection by $\pi:U=D^k\x K(U) \to K(U)$
 of the intersection $F\cap U$, that is $\pi (F\cap U)\subset K(U)$, is
 of zero $\mu_{K(U)}$-measure in $K(U)$.
 \end{itemize}
\end{definition}

Note that a set $F\subset S_\mu$ in a measured solenoid
is null-transverse if for any local
transversal $T$, the set of leaves passing through $F$
intersects $T$ in a set of zero $\mu_T$-measure.

Every point of $S'-F$ is automatically isolated as a point of $S'$ in the leaf of $S$ through
it. Therefore condition (1) is equivalent to saying that every point of $S'$ is
isolated in the corresponding leaf.

\medskip

Then we have the following
straightforward lemma.

\begin{lemma}
The solenoids $f_1: S_{1,\mu_1}\to M$ and $f_2:S_{2,\mu_2}\to M$ are almost
everywhere transversal if and only if $f_1\x f_2: (S_1\x S_2)_\mu\to M\x M$ and the
diagonal $\Delta \subset M\x M$ intersect almost everywhere
transversally.
%
\end{lemma}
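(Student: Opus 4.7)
The plan is to simply unfold Definition \ref{def:almost-transverse} on the left-hand side and Definition \ref{def:almost-transverse-2} (applied to the immersion $f_1\times f_2:S_1\x S_2\to M\x M$ and the closed submanifold $\Delta\subset M\x M$ of codimension $n=\dim M=k_1+k_2$) on the right-hand side, and observe that the two sets $F\subset S_1\x S_2$ defined by each description coincide as subsets of the same measured solenoid. Once this identification is established, conditions (1) and (2) in the two definitions become literally the same statements, since both are formulated intrinsically on $S_1\x S_2$ equipped with the product transversal measure $\mu$.

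The one nontrivial ingredient is a pointwise linear-algebra identity: for subspaces $V_1,V_2$ of a vector space $W$, one has $V_1+V_2=W$ if and only if $V_1\x V_2+\Delta_W=W\x W$, where $\Delta_W=\{(w,w):w\in W\}$. Applied to $V_i=df_i(p_i)(T_{p_i}S_i)\subset W=T_{f_1(p_1)}M=T_{f_2(p_2)}M$, and using $T_{(p_1,p_2)}(S_1\x S_2)=T_{p_1}S_1\oplus T_{p_2}S_2$ together with the natural identification $T_{(q,q)}\Delta\cong T_qM$, this shows that the condition
\[
df_1(p_1)(T_{p_1}S_1)+df_2(p_2)(T_{p_2}S_2)\neq T_{f_1(p_1)}M
\]
is equivalent to
\[
d(f_1\x f_2)(p_1,p_2)\bigl(T_{(p_1,p_2)}(S_1\x S_2)\bigr)+T_{(f_1(p_1),f_2(p_2))}\Delta\neq T_{(f_1(p_1),f_2(p_2))}(M\x M).
\]
Hence the bad set $F$ in Definition \ref{def:almost-transverse} agrees with the bad set $F$ in Definition \ref{def:almost-transverse-2} applied to $(f_1\x f_2,\Delta)$.

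Once $F$ is identified, condition (1) in both definitions says that each $p\in F$ is isolated, within the leaf of $S_1\x S_2$ through $p$, among the points of $S=\{(p_1,p_2):f_1(p_1)=f_2(p_2)\}=(f_1\x f_2)^{-1}(\Delta)$, which is the same set on both sides. Condition (2) in both definitions says that $F$ is null-transverse in the measured solenoid $(S_1\x S_2)_\mu$, in the sense that the $\mu_T$-measure of the projection of $F$ to any local transversal $T$ vanishes; this is one and the same condition. The two statements are therefore equivalent, which is the desired lemma. The only point that requires any care is the linear-algebra step above, which is standard.
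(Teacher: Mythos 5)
Your proof is correct: the linear-algebra identity $V_1+V_2=W \iff V_1\times V_2+\Delta_W=W\times W$ holds (writing $(a,b)=(v_1,v_2)+(w,w)$ reduces to $a-b\in V_1+V_2$), and the rest is indeed a direct matching of the two definitions. The paper states this lemma without proof as "straightforward," and your argument is exactly the unfolding the authors intend.
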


\medskip

Let $f:S_\mu\to M$ be an immersed solenoid intersecting transversally
almost everywhere a closed submanifold $N\subset M$. Write
$S'=f^{-1}(N)$ and let $F\subset S'$ be the subset of
non-transversal points. Note that $S'_{reg}=S'-F$ is open in $S'$
and $F$ is closed. Moreover, $S'_{reg}$ consists of the transversal
intersections, so the intersection index $\epsilon:S'_{reg} \to
\{\pm 1\}$ is well defined and continuous. We define the
intersection number as
 $$
 \int_{S'-F} \epsilon(x) d\mu(x) \, .
 $$

\begin{theorem} \label{thm:intersect-number-almost}
Suppose that  an immersed measured oriented
$k$-solenoid $f:S_\mu\to M$ and a submanifold $N\subset
M$ of codimension $k$ intersect almost everywhere transversally.
Then
 $$
 [f, S_\mu]^* \cdot [N] = \int_{S'-F} \epsilon \ d\mu \, .
 $$
\end{theorem}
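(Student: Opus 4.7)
The plan is to represent the Poincar\'e dual of $[N]$ by a Thom form and then compute the pairing with the generalized current $[f,S_\mu]$ in flow-boxes, exactly as in Theorem \ref{thm:product-dual-1}, while using the null-transverse condition to discard the contribution of non-transverse leaves. Concretely, let $U\supset N$ be a tubular neighbourhood with projection $\pi:U\to N$, and let $\tau\in\Omega^k(M)$ be a Thom form supported in $U$ integrating to one on each normal fiber. Since $\tau$ represents $[N]^*\in H^k(M,\RR)$, we have
$$
[f,S_\mu]^*\cdot [N]= \la [f,S_\mu],\tau\ra.
$$

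Next, I would fix a finite cover of $S$ by flow-boxes $U_i\cong D^k\x K(U_i)$ with a subordinate partition of unity $\{\rho_i\}$, and decompose
$$
\la [f,S_\mu],\tau\ra=\sum_i \int_{K(U_i)}\left(\int_{L_y}\rho_i f^*\tau\right)d\mu_{K(U_i)}(y).
$$
For each $i$ split $K(U_i)=K_i^{\text{reg}}\sqcup K_i^{\text{sing}}$, where $K_i^{\text{sing}}=\pi(F\cap U_i)$ is the set of transverse parameters whose leaf meets $F$. Hypothesis (2) of Definition \ref{def:almost-transverse-2} gives $\mu_{K(U_i)}(K_i^{\text{sing}})=0$, and since $\tau$ is smooth and compactly supported, the inner integral is uniformly bounded by $\|\rho_i\|_\infty\,\|\tau\|_\infty\,\Vol_k(L_y\cap\supp\tau)$, which is bounded independently of $y$; hence the singular part contributes zero.

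For $y\in K_i^{\text{reg}}$ the leaf $L_y$ meets $N$ transversally in a finite set of points $L_y\cap S'$. Around each such intersection point I would apply the local-coordinate argument from the proof of Theorem \ref{thm:product-dual-1}: using the normal bundle of $N$, choose coordinates on $M$ in which $N=\{x_1=\cdots=x_k=0\}$ and such that $(x_1,\dots,x_k)$ restrict to local coordinates on $L_y$ near the intersection point (possible by transversality). Then $f^*\tau$ becomes the pullback of a compactly supported top form on the normal fibre, so integration gives $\pm 1$ with sign exactly the intersection index $\epsilon(x)$. Weighting by $\rho_i$ and summing over intersection points on the leaf gives
$$
\int_{L_y}\rho_i f^*\tau=\sum_{x\in L_y\cap(S'-F)}\rho_i(x)\,\epsilon(x).
$$

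Finally, using the disintegration of $\mu$ along the 0-solenoid $S'-F$ (locally in $U_i$, the measure of a Borel set in $S'-F$ is obtained by integrating over $y\in K_i^{\text{reg}}$ the counting measure on the finite fibre $L_y\cap(S'-F)$), summing over $i$ and using $\sum_i\rho_i\equiv 1$ yields
$$
\la [f,S_\mu],\tau\ra=\sum_i\int_{S'-F\cap U_i}\rho_i\,\epsilon\,d\mu=\int_{S'-F}\epsilon\,d\mu,
$$
which is the desired equality. The main obstacle I anticipate is the local step on transverse leaves: one must verify carefully that, even though the leaf $L_y$ is only of class $C^{r,s}$ with a possibly wild transverse parameter, the integral $\int_{L_y}\rho_i f^*\tau$ really computes the signed count of transverse intersections with $N$ weighted by $\rho_i$. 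This reduces, via the tubular neighbourhood, to the classical statement that a Thom form integrates to $\pm 1$ on a transverse disc, applied to the restriction of $f$ to $L_y$; and the uniform bound on the inner integral, together with hypothesis (2), is what justifies discarding the non-transverse leaves without needing any further regularization of $\tau$.
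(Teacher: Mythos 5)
Your overall strategy (evaluate $[f,S_\mu]$ on a Thom form $\tau$ for $N$, work in flow-boxes, and use null-transversality to discard the transverse parameters whose leaves meet $F$) is the same as the paper's, but there are two genuine gaps at the heart of your argument. First, the leafwise identity $\int_{L_y}\rho_i f^*\tau=\sum_{x\in L_y\cap(S'-F)}\rho_i(x)\,\epsilon(x)$ is false for a \emph{fixed} Thom form: the components of $f^{-1}(\supp\tau)\cap L_y$ that touch the boundary of the flow-box disc contribute non-integer amounts, and even for an interior component $D_x$ the integral $\int_{D_x}\rho_i f^*\tau$ is an average of $\rho_i$ against $f^*\tau$, not $\rho_i(x)$. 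The paper avoids this by letting the radius $\rho$ of the tubular neighbourhood shrink (harmless on the left-hand side since $[\tau_\rho]$ is fixed in cohomology), and only over a compact set $S-U_n$ obtained by removing a neighbourhood $U_n$ of $F$; on that compact set the intersection angle is bounded below, so for $\rho$ small each component of $f^{-1}(U_\rho)$ through an intersection point is a small disc giving exactly $\epsilon(x)$. Your identity needs some version of this regularization.

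Second, and more seriously, your final disintegration step tacitly assumes $\mu(S'-F)<\infty$, i.e.\ that the number of transverse intersection points per leaf is integrable over the transversal. The paper's remark immediately after this theorem points out that this can fail: leaves near a point of $F$ may meet $N$ in arbitrarily many points of both signs, so $\int_{S'-F}\epsilon\,d\mu$ is not an absolutely convergent integral and the Fubini-type interchange you perform is not justified. Your uniform bound $\|\rho_i\|_\infty\|\tau\|_\infty\Vol_k(L_y\cap\supp\tau)$ controls the \emph{signed} inner integral but not the unsigned count, which is what the interchange requires. The paper handles this by an exhaustion $S'-U_n\nearrow S'-F$, where $U_n=\bigcup_i D^k_{3/4}\x V_{i,n}$ is built from special flow-boxes around $F$ satisfying that the annulus $(D^k-\bar D^k_{1/2})\x K(U_i)$ misses $S'$ and that the homological intersection numbers $[f(D^k_{3/4}\x\{y\})]\cdot[N]$ are uniformly bounded; the contribution of $U_n$ is then $O(\mu_{K(U_i)}(V_{i,n}))\to 0$ by that homological bound, not by a pointwise bound on $f^*\tau$. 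Note that constructing these flow-boxes is exactly where hypothesis (1) of Definition \ref{def:almost-transverse-2} (points of $F$ are isolated in $S'$ along leaves) is used --- your proposal never invokes it, which is a symptom of the missing step.
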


\begin{proof}
Fix an accessory Riemannian metric on $M$. By pull-back, this gives a metric
on $S$.

Let $p\in F$. Then by assumption, there is some $\eta>0$ such that
 $$
 B_{\frac32 \eta}(p)\cap F=B_{\frac32 \eta}(p)\cap S'=\{p\},
 $$
where $B_{r}(p)$ is the
Riemannian ball in the leaf centered at $p$ and of radius $r>0$.
It is easy to construct a flow-box
$U=D^k\x K(U)$ with $p=(0,y_0)\in U$ so that
\begin{itemize}
 \item[(i)] $D^k\x\{y_0\}=B_{\frac32 \eta} (p)$,
 \item[(ii)] $D^k_{3/4} \x\{y_0\}=B_{\eta} (p)$, ($D^k_r$ denotes the open disc of radius $r>0$),
 \item[(iii)] the open annulus $A= (D^k - \bar D^k_{1/2})$ satisfies that
  $(A \x K(U) )\cap S'=\emptyset$,
 \item[(iv)] the intersection number
  $[f(D^k_{3/4} \x\{y \})]\cdot [N]$
  is constant for $y\in K(U)$.
\end{itemize}

For achieving this, take $K(U)$ small enough. Note that the
intersection number in (iv) is well-defined
since $f(\bd (D^k_{3/4} )\x\{y \})$ does not touch $N$;
and it is locally constant by continuity.
We fix a finite covering $\{U_i\}$ of $F$ with such flow-boxes.

Let $\pi_i: U_i=D^k\x K(U_i) \to K(U_i)$ be the projection onto the second factor. By hypothesis,
$\pi_i(F\cap U_i)$ is of zero measure. We may take a nested
sequence $(V_{i,n})$ of open neighbourhoods of $\pi_i(F\cap U_i)$ in $K(U_i)$ such that
$\bigcap_{n\geq 1} V_{i,n}=\pi_i(F\cap U_i)$. Let
 $$
 U_{i,n}=D^k_{3/4}\x V_{i,n}
 $$
and
 $$
 U_n=\bigcup_i U_{i,n}\,.
 $$
Then $(U_n)$ is a nested
sequence of open neighbourhoods of $F$ in $S$. 
It may happen that  $\bigcap_{n\geq 1} U_n$ contains points of $S'-F$, but this is a set
of $\mu$-measure zero. So
 $$
 \int_{S'-U_n} \epsilon \ d\mu \too  \int_{S'-\cap_{n\geq 1} U_n} \epsilon \ d\mu=
 \int_{S'-F} \epsilon \ d\mu \, .
 $$

As $S'-U_n$ is compact,
the angle of intersection in $S'-U_n$ between $f(S)$ and $N$ is
bounded below, so there is a small $\rho>0$ (depending on $n$)
such that if $U_\rho$ is
the $\rho$-tubular neighbourhood of $N$ in $M$, then
for each
intersection point $x\in S'-U_n$, there is a (topological) disc
$D_x$ contained in a local leaf through $x$, which is exactly the
path component of $f^{-1}(U_\rho)$ through $x$. Making $\rho$ smaller
we can assume that $D_x$ is as small as we want.
Note that (iii) guarantees that $D_x$ does not touch
$D^k_{3/4} \x V_{i,n}=U_{i,n}$ for any $i$. So $D_x\subset S-U_n$.

Let $\tau_\rho$ be a Thom form for $N\subset M$, that is a closed
$k$-form supported in $U_\rho$, whose integral in the
normal space to $N$ is one. Then $\int_{D_x} \tau_\rho=1$ for any $x\in S'-U_n$.
So
 $$
 \int_{S'-U_n} \epsilon \ d\mu = \int_{S-U_n}  f^*\tau_\rho\, .
 $$

On the other hand,
 $$
 \int_{U_{i,n}}  f^*\tau_\rho =\int_{V_{i,n}}
 \left(\int_{D^k_{3/4} \x\{y\}} f^*\tau_\rho \right) d\mu_{K(U_i)} \leq
 C \, \mu_{K(U_i)}(V_{i,n}) \to 0\,,
 $$
where $C$ is a bound for all the intersection numbers in (iv)
for all $U_i$ simultaneously.
Then
 $$
 \int_{U_n}  f^*\tau_\rho \to 0\, ,
 $$
when $n\to \infty$.

Putting everything together,
 $$
 \begin{aligned} \,
  [f, S_\mu]^* \cdot [N] \, &= \la  [f, S_\mu] , [\tau_\rho]\ra  = \int_{S_\mu}
 f^*\tau_\rho = \int_{S-U_n} f^*\tau_\rho  +\int_{U_n} f^*\tau_\rho  \\ &=
 \int_{S'-U_n} \epsilon \ d\mu +
 \int_{U_n} f^*\tau_\rho \to  \int_{S'-F} \epsilon \ d\mu
 \, .
 \end{aligned}
 $$
\end{proof}

\begin{remark}
It is not true that, without further restrictions,
the measure $\mu$ is finite on $S'_{reg}=S'-F$. For instance, it may happen that
around a point $p\in F$, there are leaves of $S$ (leaves which do not go through $F$)
with arbitrary large number of positive and negative intersections with $N$ (near $p$).
Obviously, the difference between positive and negative intersections is bounded.
Therefore, there is no current associated to $(S'_{reg},\mu)$.
\end{remark}

\bigskip

Consider now two immersed measured oriented solenoids $f_1:
S_{1,\mu_1}\to M$, $f_2:S_{2,\mu_2}\to M$ intersecting almost everywhere
transversally. Let $F\subset S_1\x S_2$ be the subspace of
non-transversal intersection points, which has null-transversal
measure in $S_1\x S_2$. Set $S= (S_1 \x S_2) \cap \tilde{f}^{-1}(\Delta)$,
where $\tilde{f}=f_1\x f_2$.
Then there is an intersection index $\epsilon(x)$ for each $x\in S
-F$ and an intersection measure $\mu$ on $S-F$. We define the
intersection product as
 $$
 \int_{S-F} \epsilon  \ d \mu  \, .
 $$
Then theorem \ref{thm:intersect-number-almost} implies the following:

\begin{theorem}
In the situation above, we have that
 $$
 [f_1, S_{1,\mu_1}]^* \cdot [f_2,S_{2,\mu_2}]^* =
\int_{S-F} \epsilon (x) \ d \mu(x) \, .
 $$
\end{theorem}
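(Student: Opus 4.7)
The plan is to reduce the statement to Theorem \ref{thm:intersect-number-almost} via the diagonal trick, exactly mirroring how Theorem \ref{thm:product-dual-2} handled the fully transversal case. First I would form the product immersed solenoid
$$\tilde f = f_1 \times f_2 : (S_1 \times S_2)_\mu \to M \times M,$$
where $\mu = \mu_1 \times \mu_2$ is the product transversal measure, and consider the diagonal $\Delta \subset M \times M$ as a closed submanifold of codimension $n = k_1 + k_2$. By the lemma stated just after Definition \ref{def:almost-transverse-2}, the almost everywhere transversality of $f_1$ and $f_2$ translates into the almost everywhere transversality of $\tilde f$ with $\Delta$. Hence Theorem \ref{thm:intersect-number-almost} applies and yields
$$[\tilde f,(S_1 \times S_2)_\mu]^* \cdot [\Delta] = \int_{S-F} \tilde\epsilon \, d\mu,$$
where $\tilde\epsilon$ denotes the intersection index for $\tilde f$ with $\Delta$ and $S = (S_1\times S_2)\cap \tilde f^{-1}(\Delta)$.

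Next I would identify the two sides with the quantities appearing in the theorem. Under the natural diffeomorphism $\varphi : M \to \Delta$, the set $S$ is canonically identified with the intersection solenoid of Definition/Proposition \ref{prop:15.3} and its induced measure agrees with the product transversal measure, as spelled out in (\ref{eqn:mu-product}). A direct computation in the product flow-box coordinates given just before Theorem \ref{thm:product-dual-1} shows that the sign of the transversal intersection of a leaf of $S_1\times S_2$ with $\Delta$ at $(p_1,p_2)$ equals the sign $\epsilon(p_1,p_2)$ of the intersection of the leaves of $S_1$ and $S_2$ in $M$; in other words $\tilde\epsilon = \epsilon$ on $S - F$.

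For the left-hand side, I would repeat verbatim the computation already carried out in the proof of Theorem \ref{thm:product-dual-2}: using a K\"unneth decomposition and evaluating on product forms with partitions of unity subordinate to product flow-boxes, one obtains
$$[\tilde f,(S_1 \times S_2)_\mu] = [f_1,S_{1,\mu_1}] \otimes [f_2,S_{2,\mu_2}] \in H_{k_1+k_2}(M\times M).$$
Pulling back by the inclusion $i : \Delta \hookrightarrow M\times M$ and dualizing converts the cap with $[\Delta]$ into the cup product $[f_1,S_{1,\mu_1}]^* \cup [f_2,S_{2,\mu_2}]^*$, whose evaluation on $1 \in H^0(M,\mathbb{R})$ is, by definition, $[f_1,S_{1,\mu_1}]^* \cdot [f_2,S_{2,\mu_2}]^*$. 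Combining these identifications gives the desired equality.

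The only point requiring care is the verification that Theorem \ref{thm:intersect-number-almost} truly applies in the product setting, that is, that the almost-everywhere-transversality hypotheses (isolatedness of points of $F$ in leaves, and null-transversality of $F$ with respect to the product measure) transfer correctly from $(f_1,f_2)$ to the pair $(\tilde f, \Delta)$. This is exactly the content of the lemma following Definition \ref{def:almost-transverse-2}, which can be checked directly from the definitions; with that lemma in hand, the argument above is essentially an assembly of results already established, so no further substantial obstacle remains.
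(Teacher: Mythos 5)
Your proposal is correct and follows essentially the same route as the paper, which derives this theorem directly from Theorem \ref{thm:intersect-number-almost} applied to the product solenoid $f_1\times f_2$ and the diagonal $\Delta\subset M\times M$, using the lemma equating the two notions of almost everywhere transversality and the K\"unneth identification already established in the proof of Theorem \ref{thm:product-dual-2}. The paper leaves these steps implicit (stating only that the earlier theorem ``implies'' the result), and your write-up simply spells them out.
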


%
%
%
%
%
%
%
%
%
%

\section{Intersection of analytic solenoids} \label{sec:analytic}

It is now our intention to translate the theory of solenoids intersecting
almost-everywhere transversally to the case where the
dimensions are not complementary, that is, when $k_1+k_2>n$.

\begin{definition}\textbf{\em (Almost everywhere transversality)}\label{def:almost-transverse-3}
Let $f_1: S_{1,\mu_1}\to M$ and $f_2:S_{2,\mu_2}\to M$ be two measured
immersed oriented solenoids in an oriented $n$-manifold $M$, with
$k_1+k_2 \geq n$, $k_1=\dim S_1$, $k_2=\dim S_2$. They intersect almost everywhere
transversally if
the set
 $$
 \begin{aligned}
 F=\{(p_1,p_2) & \,\in S_1\x S_2 \, ; \\ &\,  f_1(p_1)=f_2(p_2),
 df_1(p_1) (T_{p_1}S_1)+df_2(p_2) (T_{p_2}S_2) \neq T_{f_1(p_1)}M \}
 \subset S_1\x S_2
  \end{aligned}
 $$
of non-transversal intersection points satisfies:
\begin{enumerate}
\item[(1)] every point $p\in F$ is an isolated point of $F$
 in the leaf of $S_1\x S_2$ through $p$.
 \item[(2)] the set
  $$
  S =\{(p_1,p_2) \in S_1\x S_2 \, ; \,  f_1(p_1)=f_2(p_2) \}
 \subset S_1\x S_2
  $$
  is $C^{1,0}$-conjugate, locally near any $p\in F$, to a leafwise (real) analytic
  set (i.e. it is of class $C^{\omega,0}$).
\item[(3)] $F$ is null-transverse in
$S_1\x S_2$ (with the natural product transversal measure $\mu$), i.e.
if the set of leaves of $S_1\x S_2$ intersecting $F$ has zero
$\mu$-measure.
\end{enumerate}
\end{definition}

Then it is useful to translate to $S_1\times S_2$ the meaning of
almost everwywhere transversality.

\begin{definition}\textbf{\em (Almost everywhere transversality)}\label{def:almost-transverse-4}
Let $f: S_{\mu}\to M$ be a measured
immersed oriented $k$-solenoid and $N\subset M$ a closed submanifold of codimension $q$.
They intersect almost everywhere
transversally if the set
 $$
 F=\{p \in S \, ;  \, f (p)\in N, \,
 df(p) (T_{p}S)+ T_{f(p)}N \neq T_{f(p)} M \}
 \subset S
 $$
of non-transversal intersection points satisfies that:
 \begin{itemize}
 \item[(1)] every point $p\in F$ is isolated as a point of $F$ in the leaf of $S$ through $p$,
  \item[(2)] the set
  $$
  S' =\{p \in S \, ; \,  f(p)\in N\} \subset S
  $$
  is $C^{1,0}$-conjugate, locally near any $p\in F$, to a leafwise (real) analytic
  set (i.e. it is of class $C^{\omega,0}$).
 \item[(2)] $F\subset S_\mu$ is null-transverse, i.e.\
 for any flow-box $U=D^k\x K(U)$, the projection by $\pi:U=D^k\x K(U) \to K(U)$
 of the intersection $F\cap U$, that is $\pi (F\cap U)\subset K(U)$, is
 of zero $\mu_{K(U)}$-measure in $K(U)$.
 \end{itemize}
\end{definition}

We have the following
straightforward lemma.

\begin{lemma}
The solenoids $f_1: S_{1,\mu_1}\to M$ and $f_2:S_{2,\mu_2}\to M$ are almost
everywhere transversal if and only if $\tilde{f}=f_1\x f_2: (S_1\x S_2)_\mu
\to M\x M$ and the
diagonal $\Delta \subset M\x M$ intersect almost everywhere
transversally.
\end{lemma}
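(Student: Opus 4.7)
The plan is to unwind both definitions and show that, when Definition~\ref{def:almost-transverse-4} is applied to the product immersion $\tilde f=f_1\times f_2\colon(S_1\times S_2)_\mu\to M\times M$ together with the closed submanifold $\Delta\subset M\times M$, it asks for exactly the three conditions listed in Definition~\ref{def:almost-transverse-3}. The first step is to match the underlying sets. For $p=(p_1,p_2)\in S_1\times S_2$ one has $\tilde f(p)\in\Delta$ if and only if $f_1(p_1)=f_2(p_2)$, so the set $S'=\tilde f^{-1}(\Delta)\cap (S_1\times S_2)$ appearing in Definition~\ref{def:almost-transverse-4} literally coincides with the set $S$ of Definition~\ref{def:almost-transverse-3}.

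The key technical step is to identify the respective sets of non-transversal points. Writing $x=f_1(p_1)=f_2(p_2)$ and $V_i=df_i(p_i)(T_{p_i}S_i)\subset T_xM$, one has
\[
 d\tilde f(p)\bigl(T_p(S_1\times S_2)\bigr)=V_1\oplus V_2\subset T_xM\oplus T_xM=T_{\tilde f(p)}(M\times M),
\]
while $T_{\tilde f(p)}\Delta=\{(v,v):v\in T_xM\}$. A short linear algebra check shows that $V_1\oplus V_2 + T_{\tilde f(p)}\Delta = T_xM\oplus T_xM$ holds if and only if $V_1+V_2=T_xM$: given $(w_1,w_2)$, writing it as $(v_1,v_2)+(v,v)$ is the same as solving $v_1-v_2=w_1-w_2$ with $v_i\in V_i$, and since $w_1-w_2$ is arbitrary in $T_xM$ this is equivalent to $V_1+V_2=T_xM$. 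Consequently the set $F$ of non-transversal intersection points in Definition~\ref{def:almost-transverse-3} equals the set $F$ in Definition~\ref{def:almost-transverse-4} applied to $(\tilde f,\Delta)$.

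Once the sets $S$ and $F$ have been identified on both sides, the three conditions match term-by-term. Condition (1) in both definitions says that every $p\in F$ is isolated inside $F$ (respectively inside $S$, but these coincide off $F$ by the previous remark and condition (2)) in the leaf of $S_1\times S_2$ through $p$; these are the same leaves since the product solenoid structure is the one inducing the flow-boxes of both definitions. Condition (2) in both cases is the $C^{1,0}$-conjugacy of $S$ near $F$ to a leafwise real analytic set, which is again the same requirement. Condition (3) is null-transversality of $F$ inside $(S_1\times S_2)_\mu$; Definition~\ref{def:almost-transverse-3} phrases it as ``the set of leaves of $S_1\times S_2$ meeting $F$ has zero $\mu$-measure'', while Definition~\ref{def:almost-transverse-4} phrases it in terms of flow-box transversals $\pi(F\cap U)\subset K(U)$, and these two formulations of null-transversality are equivalent as noted immediately after Definition~\ref{def:almost-transverse-2}.

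There is no real obstacle here; the only non-tautological point is the linear algebra identity $V_1+V_2=T_xM\iff V_1\oplus V_2+T\Delta=T(M\times M)$, which identifies the non-transversal loci. Everything else is a direct rewriting of the definitions, using that the product solenoid structure on $S_1\times S_2$ is the one that induces the notion of ``leaf through $p$'' and the product transversal measure $\mu$ defined in~\eqref{eqn:mu-product}.
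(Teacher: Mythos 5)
Your argument is correct, and it supplies exactly the routine verification that the paper omits: the lemma is stated there without proof, introduced only as ``the following straightforward lemma,'' and the intended justification is precisely your unwinding of Definitions~\ref{def:almost-transverse-3} and~\ref{def:almost-transverse-4} together with the linear-algebra identity $V_1+V_2=T_xM\iff V_1\oplus V_2+T_{\tilde f(p)}\Delta=T_{\tilde f(p)}(M\times M)$. The only quibble is your parenthetical on condition (1): in the Section~\ref{sec:analytic} versions of both definitions the point is required to be isolated in $F$ (not in $S$ or $S'$), so the two conditions match verbatim and no appeal to condition (2) is needed there.
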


\medskip

Let $f:S_\mu\to M$ be an immersed oriented $k$-solenoid intersecting transversally
almost everywhere a closed oriented submanifold $N\subset M$ of codimension $q$. Write
$S'=f^{-1}(N)$ and let $F\subset S'$ be the subset of
non-transversal points. Note that $S'_{reg}=S'-F$ is open in $S'$
and $F$ is closed.
Let $p\in S'_{reg}=S'-F$. Then the transversal intersection property implies that there
exists a flow-box $U=D^k\x K(U)$ for $S$ around $p$, with coordinates
$(x_1,\ldots, x_k, y_1,\ldots, y_l)$ such that $N$ is defined by $x_1=\ldots
=x_q=0$. Thus $S'_{reg}$ has locally the
structure of $(k-q)$-dimensional oriented solenoid, with a transversal measure $\mu$
induced by $\mu$ and invariant by holonomy.
Note that $S'_{reg}$ is not a solenoid because it is not compact.

\begin{theorem} \label{thm:intersect-number-almost-2}
Suppose that  a $k$-solenoid $f:S_\mu\to M$ and a submanifold $N\subset
M$ of codimension $q$ intersect almost everywhere transversally.
Let $\omega \in \Omega^{k-q}(M)$ be a closed form. Then
 $$
 \la [f, S_\mu]^* \cup  [\omega] , [N]\ra  = \int_{S'-F} \omega \, .
 $$
\end{theorem}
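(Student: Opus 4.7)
The plan is to mimic the proof of Theorem \ref{thm:intersect-number-almost}, inserting the form $\omega$ into every integral and using the real-analytic hypothesis of Definition \ref{def:almost-transverse-4}(2) to secure a uniform bound near the non-transversal locus. First, I would introduce a Thom form $\tau_\rho\in\Omega^q(M)$ for $N\subset M$ supported in a $\rho$-tubular neighbourhood $U_\rho$. Since $\omega$ and $\tau_\rho$ are closed and $[\tau_\rho]=\PD([N])$, the left-hand side rewrites as
$$
\la [f,S_\mu]^*\cup[\omega],[N]\ra
=\la [f,S_\mu],\omega\wedge\tau_\rho\ra
=\int_{S}f^*(\omega\wedge\tau_\rho)\,.
$$
The goal is then to split this as an integral over $S-U_n$ plus an error over $U_n$, where $U_n$ is a shrinking neighbourhood of the (compact) non-transversal set $F$, and to show that both terms converge correctly as $n\to\infty$ and $\rho\to 0$.

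Around each $p\in F$ I would construct flow-boxes $U_i=D^k\x K(U_i)$ exactly as in properties (i)--(iii) of the proof of Theorem \ref{thm:intersect-number-almost} (inner disc a leaf ball around $p$, an outer annulus disjoint from $S'$, finitely many $U_i$ covering $F$). The condition that must replace (iv) is the uniform estimate
$$
\sup_{y\in K(U_i)}\left|\int_{D^k_{3/4}\x\{y\}}f^*(\omega\wedge\tau_\rho)\right|\le C_i\,,
$$
uniform in $\rho$ small enough. This is exactly where Definition \ref{def:almost-transverse-4}(2) intervenes: the $C^{1,0}$-conjugacy of $S'$ to a leafwise real-analytic set gives uniform bounds on the $(k-q)$-dimensional volume of the fibrewise intersection $S'_y=(f|_{D^k_{3/4}\x\{y\}})^{-1}(N)$ (by \L{}ojasiewicz-type volume estimates for families of real-analytic sets), and a Fubini/pushforward computation as in Theorem \ref{thm:product-dual-1} shows $\int_{D^k_{3/4}\x\{y\}}f^*(\omega\wedge\tau_\rho)\to\int_{S'_y}\omega$ with a bound independent of $y$ and of small $\rho$. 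With the null-transverse condition in hand, I would pick nested open neighbourhoods $V_{i,n}\subset K(U_i)$ of $\pi_i(F\cap U_i)$ with $\mu_{K(U_i)}(V_{i,n})\to 0$, set $U_n=\bigcup_i D^k_{3/4}\x V_{i,n}$, and obtain
$$
\left|\int_{U_n}f^*(\omega\wedge\tau_\rho)\right|\le\sum_i C_i\,\mu_{K(U_i)}(V_{i,n})\longrightarrow 0\,.
$$

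On the complement $S-U_n$, which is compact, $f$ is transverse to $N$, so following the argument of Theorem \ref{thm:product-dual-1} there is, for $\rho$ small (depending on $n$), a leafwise normal projection $\tilde\pi:f^{-1}(U_\rho)\cap(S-U_n)\to S'-U_n$ whose normal discs integrate $\tau_\rho$ to $1$; this yields $\int_{S-U_n}f^*(\omega\wedge\tau_\rho)=\int_{S'-U_n}\omega$. Since $S'-U_n$ exhausts $S'-F=S'_{\mathrm{reg}}$, which carries the structure of a (non-compact) measured $(k-q)$-solenoid inherited from $S_\mu$, the integral $\int_{S'-F}\omega$ is a well-defined limit, and combining the two displays gives the theorem. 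The main obstacle is precisely the uniform bound on $\int_{D^k_{3/4}\x\{y\}}f^*(\omega\wedge\tau_\rho)$ across $K(U_i)$: without the leafwise analytic structure a nearby leaf could accumulate unboundedly many \emph{unsigned} intersections with $N$ and the factor $\mu_{K(U_i)}(V_{i,n})\to 0$ would not suffice to kill the error. Invoking analyticity is the essential new ingredient over Theorem \ref{thm:intersect-number-almost}, where the $0$-dimensional intersection was controlled by the integer-valued local intersection number of condition (iv).
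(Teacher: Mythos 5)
Your proposal follows essentially the same route as the paper's proof: introduce a Thom form $\tau_\rho$, treat the region away from $F$ by the leafwise normal projection as in Theorem \ref{thm:product-dual-1}, and control the contribution near $F$ by combining the null-transversality of $F$ with the uniform bound on the $(k-q)$-volume of the leafwise slices $S'_y$ supplied by the leafwise analyticity hypothesis. The paper packages this as the convergence of currents $(f,S_\mu)\wedge\tau_\rho\to(S'_{reg})$ tested on forms supported off and near $F$, while you split the domain into $S-U_n$ and $U_n$; the estimates involved are the same.
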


\begin{proof}
Let us take a Thom form $\tau_\rho$ for $N$, and consider the current
 \begin{equation}\label{eqn:we}
 (f, S_\mu) \wedge \tau_\rho
 \end{equation}
defined as the wedge of the generalized current with the smooth form $\tau_\rho$.
Let us see that there is a limit for (\ref{eqn:we}) when $\rho \to 0$.

Let us define the current of integration $S'_{reg}$. This is obviously well-defined
off $F$. Now suppose that $\omega$ is a $(k-q)$-form supported in a small ball around
a point $p\in F$. Let $U=D^k\x K(U)$ be a flow-box around $p$, where $\omega$ is defined.
Then, after taking a $C^{1,0}$-diffeomorphism, we can suppose that $S'$ is defined
as $f(x,y)=0$, for some $f:D^k\x K(U) \to \RR^q$ of class $C^{\omega,0}$.
Then $S_y=S'_{reg}\cap L_y=f_y^{-1}(0)$ is an analytic subset. Hence the integral of $\omega$
on $S_y$ is bounded
 $$
 \int_{S_y} \omega \leq C ||\omega||\, ,
 $$
where $C$ is a constant that we can suppose valid for all $y\in K(U)$ by continuity.

Moreover, making the radius of the ball smaller, we have that $C\to 0$.
Hence
 $$
 \la (S_{reg}),\omega\ra := \int_{K(U)} \left( \int_{S_y} \omega \right) d\mu_{K(U)}(y)
 $$
is well-defined.

Now, to see that
 $$
 (f, S_\mu) \wedge \tau_\rho \to (S_{reg})\, ,
 $$
we apply both sides to a $(k-q)$-form $\omega$. For $\omega$ supported in a flow-box off $F$,
we have that
 $$
 \la (f, S_\mu) \wedge \tau_\rho ,\omega \ra  =\int_{K(U)} \int_{L_y}
 \tau_\rho\wedge\omega \, d\mu_{K(U)}(y)\to \int_{K(U)} \int_{S_y} \omega \, d\mu_{K(U)}(y)=
 \la (S_{reg}),\omega\ra \, .
 $$

Now let $\omega$ supported in an $\epsilon$-ball around $p\in F$. Then $\la (S_{reg}),\omega\ra$
is as small as we want, and
 $$
 \lim_{\rho\to 0} \la (f, S_\mu) \wedge \tau_\rho ,\omega \ra = \lim_{\rho\to 0} \int_{K(U)} \int_{L_y}
 \tau_\rho\wedge\omega
 $$
is small. Since $\mu_{K(U)}({K(U)})$ is small, it only remains to see that
 $$
 \lim_{\rho\to 0} \int_{L_y} \tau_\rho\wedge\omega  = \int_{S_y} \omega  
 $$
is bounded for $y$ off the bad locus. This is bounded by the area of $S_y$ times the norm
of $\omega$, and both these quantities are bounded (the first one
is bounded due to the transversal continuity).

%

 \end{proof}

Consider now two immersed measured oriented solenoids $f_1:
S_{1,\mu_1}\to M$, $f_2:S_{2,\mu_2} \to M$ intersecting almost everywhere
transversally. Let $F\subset S_1\x S_2$ be the subspace of
non-transversal intersection points, which has null-transversal
measure in $S_1\x S_2$. Set $S= (S_1 \x S_2) \cap \tilde{f}^{-1}(\Delta)$,
where $\tilde{f}=f_1\x f_2: S_1\x S_2\to M\x M$.
Then Theorem \ref{thm:intersect-number-almost-2} implies that
 $$
 \la [f_1, S_{1,\mu_1}]^* \cup [f_2,S_{2,\mu_2}]^*, [\omega]\ra
 = \int_{S-F} \omega \, ,
 $$
for any closed form $\omega$ of degree $k_1+k_2-n$.

\begin{corollary}
  Let $M$ be an analytic manifold, and let  $f_1:
S_{1,\mu_1}\to M$, $f_2:S_{2,\mu_2} \to M$ be two immersed measured oriented solenoids
of class $C^{\omega,0}$ (that is, with analytic leaves).
Let $S= \{(p_1,p_2)\in S_1\x S_2 \, ; \,  f_1(p_1)=f_2(p_2) \}$, and let
 $F \subset S$ consist of points $(p_1,p_2)$ such that the leaves of
$S_1$ and $S_2$ at $p_1$ and $p_2$ do not intersect transversally.
Suppose that
\begin{itemize}
\item[(1)] every point $p\in F$ is an isolated point of $F$
 in the leaf of $S_1\x S_2$ through $p$.
\item[(2)] $F$ is null-transverse in
$S_1\x S_2$.
\end{itemize}
Then
 $$
 \la [f_1, S_{1,\mu_1}]^* \cup [f_2,S_{2,\mu_2} ]^* , [\omega] \ra  = \int_{S-F} \omega \, ,
 $$
for any $[\omega]\in H^{k_1+k_2-n}(M)$.
\end{corollary}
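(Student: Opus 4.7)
The plan is to read the corollary as the direct specialization of the formula displayed immediately before its statement, which is itself Theorem \ref{thm:intersect-number-almost-2} applied to the product $\tilde f=f_1\times f_2:(S_1\times S_2)_\mu\to M\times M$ and the diagonal $\Delta\subset M\times M$. Since the paragraph preceding the corollary already deduces the identity from almost everywhere transversality of $(\tilde f,\Delta)$ in the sense of Definition \ref{def:almost-transverse-4}, and that property is equivalent, by the lemma just above the corollary, to almost everywhere transversality of $(f_1,f_2)$ in the sense of Definition \ref{def:almost-transverse-3}, the only work is to verify the three conditions of Definition \ref{def:almost-transverse-3} for the pair $(f_1,f_2)$ under the hypotheses of the corollary.

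Conditions (1) and (3) of Definition \ref{def:almost-transverse-3} are copies of hypotheses (1) and (2) of the corollary, so nothing is to be proved there. The remaining condition is (2) of Definition \ref{def:almost-transverse-3}: the intersection set $S$ must be $C^{1,0}$-conjugate, locally near any point of $F$, to a leafwise real-analytic subset. This is where the analytic hypothesis enters, and under the $C^{\omega,0}$ assumption the required conjugation is trivial. Indeed, $M$ analytic makes $\Delta$ an analytic submanifold of $M\times M$; and in any product flow-box $D^{k_1}\times D^{k_2}\times K(U_1)\times K(U_2)$, once local analytic coordinates $(y_1,\ldots,y_n)$ on $M$ are chosen around a common image point, the defining equations for $S$ read
$$f_{1,i}(x_1,t_1)-f_{2,i}(x_2,t_2)=0, \qquad i=1,\ldots,n,$$
and these are real-analytic in the leaf variables $(x_1,x_2)$ for each fixed transversal parameter $(t_1,t_2)$. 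Hence $S$ is already leafwise real-analytic, so Definition \ref{def:almost-transverse-3}(2) holds with the identity as the local conjugation.

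Once this verification is in place, almost everywhere transversality of $(\tilde f,\Delta)$ is secured through the lemma, and the formula displayed just before the corollary becomes applicable verbatim, yielding the stated identity. The main (and essentially only) point requiring thought is the observation that leafwise analyticity in the mixed-regularity category $C^{\omega,0}$ is enough to supply condition (2) of Definition \ref{def:almost-transverse-3} with no control whatsoever on the transversal direction; after that, the rest is a direct quotation of Theorem \ref{thm:intersect-number-almost-2} together with the K\"unneth identification $[\tilde f,(S_1\times S_2)_\mu]=[f_1,S_{1,\mu_1}]\otimes[f_2,S_{2,\mu_2}]$ already carried out in the proof of Theorem \ref{thm:product-dual-2}, and the identification of the restriction to the diagonal $i:M\cong\Delta\hookrightarrow M\times M$ of the cross product with the cup product on $M$.
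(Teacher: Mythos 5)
Your proposal is correct and follows essentially the same route as the paper: the paper's proof consists precisely of the one-line observation that, under the $C^{\omega,0}$ hypothesis, condition (2) of Definition \ref{def:almost-transverse-3} is automatic, the remaining conditions being the corollary's hypotheses, after which the formula preceding the corollary (i.e.\ Theorem \ref{thm:intersect-number-almost-2} applied to $f_1\times f_2$ and the diagonal) applies verbatim. Your explicit verification that the defining equations of $S$ in a product flow-box are leafwise analytic is simply the spelled-out version of that observation.
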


\begin{proof}
 We only need to note that condition (2) in Definition \ref{def:almost-transverse-3} is automatic.
\end{proof}

\end{document}